\definecolor{darkblue}{rgb}{0.2,0.2,0.6}
\definecolor{superdarkblue}{rgb}{0.2,0.2,0.3}
\definecolor{darkgreen}{rgb}{0.2,0.65,0.2}
\numberwithin{equation}{section}
\newtheorem{theorem}{Theorem}[section]
\newtheorem{proposition}[theorem]{Proposition}
\newtheorem{lemma}[theorem]{Lemma}
\newtheorem{corollary}[theorem]{Corollary}
\theoremstyle{definition}
\newtheorem{remark}[theorem]{Remark}
\def\beq{\begin{equation}}
\def\eeq{\end{equation}}
\newcommand{\bea}{\begin{eqnarray}}
\newcommand{\eea}{\end{eqnarray}}
\newcommand{\beas}{\begin{eqnarray*}}
\newcommand{\eeas}{\end{eqnarray*}}
\newcommand{\bel}{\begin{equation} \label}
\newcommand{\ee}{\end{equation}}
\newcommand{\bethl}{\begin{theorem} \label}
\newcommand{\ethe}{\end{theorem}}
\newcommand{\beprl}{\begin{proposition} \label}
\newcommand{\epr}{\end{proposition}}
\newcommand{\belel}{\begin{lemma} \label}
\newcommand{\ele}{\end{lemma}}
\newcommand{\becol}{\begin{corollary} \label}
\newcommand{\eco}{\end{corollary}}
\newcommand{\bepf}{\begin{proof} \smartqed}
\newcommand{\epf}{\qed \end{proof}}
\newcommand{\one}{\mathds{1}}
\newcommand{\I}{i}
\newcommand{\E}{e}
\newcommand{\rd}{{\mathbb R}^{2}}
\newcommand{\re}{{\mathbb R}}
\newcommand{\C}{{\mathbb C}}
\newcommand{\N}{{\mathbb N}}
\newcommand{\Z}{{\mathbb Z}}
\newcommand{\gD}{{\mathfrak{D}}}
\newcommand{\cC}{{\mathcal C}}
\newcommand{\cD}{{\mathcal D}}
\newcommand{\cJ}{{\mathcal J}}
\newcommand{\cM}{{\mathcal M}}
\newcommand{\cT}{{\mathcal T}}
\newcommand{\cU}{{\mathcal U}}
\newcommand{\ho} {H_0}
\newcommand{\ha} {H_\upsilon}
\newcommand{\ham} {H_{-\upsilon}}
\newcommand{\hapm} {H_{\pm \upsilon}}
\newcommand{\gae} {G_\upsilon(\lambda)}
\newcommand{\ro} {(\ho +\lambda)^{-1}}
\newcommand{\ra} {(\ha +\lambda)^{-1}}
\newcommand{\ram} {(\ham +\lambda)^{-1}}
\newcommand{\qaep} {Q^+_\upsilon(\lambda)}
\newcommand{\qaem} {Q^-_\upsilon(\lambda)}
\newcommand{\qaepm} {Q^\pm_\upsilon(\lambda)}
\newcommand{\ga} {\Gamma}
\newcommand{\tqv} {\hat{T}_q(V)}
\newcommand{\tqaga} {T_q(\upsilon \delta_\ga)}
\newcommand{\toaga} {T_0(\upsilon \delta_\ga)}
\newcommand{\cir} {\cC_r}
\newcommand{\tqocir} {T_q(\delta_{\cir})}
\newcommand{\muql} {\mu_q(\lambda)}
\newcommand{\muol} {\mu_0(\lambda)}
\newcommand{\pqm} {P_q^-}
\newcommand{\pqp} {P_q^+}
\newcommand{\pqgm} {P_{q,>}^-}
\newcommand{\pqum} {P_{q+1}^-}
\newcommand{\pqup} {P_{q+1}^+}
\newcommand{\pqugp} {P_{q+1,>}^+}
\newcommand{\xqpe} {X_{q}^+(\lambda)}
\newcommand{\yqpe} {Y_{q}^+(\lambda)}
\newcommand{\xqme} {X_{q}^-(\lambda)}
\newcommand{\yqme} {Y_{q}^-(\lambda)}
\newcommand{\oin} {\Omega_{\rm in}}
\newcommand{\oex} {\Omega_{\rm ex}}
\newcommand{\onat} {\Omega_\natural}
\newcommand{\sqm} {S_q^-(\lambda)}
\newcommand{\sqp} {S_q^+(\lambda)}
\newcommand{\sqgp} {S_{q,>}^+(\lambda)}
\newcommand{\sqlp} {S_{q,<}^+(\lambda)}
\newcommand{\sqlm} {S_{q,<}^-(\lambda)}
\newcommand{\sqgm} {S_{q,>}^-(\lambda)}
\newcommand{\dimker}{{\rm dim \, ker}}
\newcommand{\Ker}{{\rm ker}}
\begin{document}
\title[Landau levels under $\delta$-interactions]{The fate of Landau levels under $\delta$-interactions}

\author[J.~Behrndt]{Jussi~Behrndt}
\author[M.~Holzmann]{Markus~Holzmann}
\author[V.~Lotoreichik]{Vladimir~Lotoreichik}
\author[G.~Raikov]{Georgi~Raikov$^\dagger$}

\address{Institut f\"ur Angewandte Mathematik\\
Technische Universit\"at Graz \\
Steyrergasse 30\\
8010 Graz \\
Austria}
\email{behrndt@tugraz.at}

\address{Institut f\"ur Angewandte Mathematik\\
Technische Universit\"at Graz \\
Steyrergasse 30\\
8010 Graz \\
Austria}
\email{holzmann@math.tugraz.at}

\address{Department of Theoretical Physics\\
Nuclear Physics Institute\\
Czech Academy of Sciences\\
250 68 \v{R}e\v{z} near Prague\\
 Czech Republic}
\email{lotoreichik@ujf.cas.cz}

\address{Facultad de Matem\'aticas\\
Pontificia Universidad Cat\'olica de Chile\\
Av. Vicu\~na Mackenna 4860\\
Santiago de Chile\\
Chile}
\address{
Institute of Mathematics and Informatics\\
Bulgarian Academy of Sciences\\
Acad. G. Bonchev Str., bl. 8,\\
1113 Sofia, Bulgaria}

\begin{abstract}
We consider the self-adjoint Landau Hamiltonian $H_0$ in $L^2(\rd)$ whose spectrum consists of  infinitely degenerate  eigenvalues $\Lambda_q$, $q \in \Z_+$, and the
perturbed operator $\ha = \ho + \upsilon\delta_\ga$, where $\ga \subset \rd$ is a regular Jordan $C^{1,1}$-curve and $\upsilon \in L^p(\ga;\re)$, $p>1$, has a constant sign. We investigate $\Ker(\ha -\Lambda_q)$, $q \in \Z_+$, and show that generically 
$$0 \leq \dimker(\ha -\Lambda_q) - \dimker(\tqaga) < \infty,$$ where $\tqaga = p_q (\upsilon \delta_\Gamma)p_q$,  is an operator of Berezin-Toeplitz type, acting in $p_q L^2(\rd)$, and  $p_q$ is the orthogonal projection on $\Ker\,(\ho -\Lambda_q)$. If $\upsilon \neq 0$ and $q = 0$, we prove that $\Ker\,(\toaga) = \{0\}$. If $q \geq 1$ and $\ga = \cir$ is a circle of radius $r$, we show that $\dimker(\tqocir) \leq q$, and  the set of $r \in (0,\infty)$ for which $\dimker(\tqocir) \geq 1$ is infinite and discrete.
\end{abstract}

\maketitle

{\bf  AMS 2010 Mathematics Subject Classification:} 81Q10, 47A10, 47A55, 47N50\\

{\bf  Keywords:} Landau Hamiltonian, $\delta$-interactions, perturbations of eigenspaces,\\ Berezin-Toeplitz operators, Laguerre polynomials

\section{Introduction}\label{s1j} \setcounter{equation}{0}

The aim of this article is to study the spectral type of the Landau levels of the singularly perturbed Landau Hamiltonian
\begin{equation}\label{hv}
 H_\upsilon=(-i\nabla - A)^2+\upsilon\delta_\Gamma,
\end{equation}
where $A(x) := \tfrac{b}{2}(-x_2,x_1)$, $x = (x_1,x_2) \in \rd$, is a magnetic potential which generates a  constant scalar magnetic field $b>0$, and the singular perturbation 
is supported on a $C^{1,1}$-smooth Jordan curve $\ga \subset \rd$ and has strength $\upsilon \in L^p(\ga;\re)$, $p>1$. 
The expression \eqref{hv} is of formal nature here and the self-adjoint operator $H_\upsilon$ will be defined rigorously via the corresponding quadratic form in 
Section~\ref{s1}.
If the singular perturbation is absent, that is, 
$\upsilon=0$ in \eqref{hv}, then the operator reduces to the usual self-adjoint Landau Hamiltonian $H_0=(-i\nabla - A)^2$. 
It is well known that
$$
\sigma(\ho) = \sigma_{\rm ess}(\ho) = \bigcup_{q \in \Z_+} \left\{\Lambda_q\right\},
$$
where the {\em Landau levels} $\Lambda_q := b(2q+1)$, $q \in \Z_+=\{0,1,2,\dots\}$, are eigenvalues of $\ho$ of infinite multiplicity. Under our assumption on $\Gamma$ and 
$\upsilon$ it turns out that $H_\upsilon$ is a compact perturbation of $H_0$ in the resolvent sense and hence the essential spectrum remains invariant, that is, 
$$
\sigma_{\rm ess}(\ha) = \sigma_{\rm ess}(\ho) = \bigcup_{q \in \Z_+} \left\{\Lambda_q\right\}.
$$
In the spectral gaps $(\Lambda_{q-1}, \Lambda_q)$, where $q \in \Z_+$ and $\Lambda_{-1} := -\infty$, of $\ho$ there may appear discrete eigenvalues of $\ha$ which 
can only accumulate at the Landau levels $\Lambda_q$, $q \in \Z_+$. Some results on the asymptotic distribution near any fixed $\Lambda_q$ of these discrete eigenvalues were obtained in \cite{BeExHoLo20}.
    In particular, it was shown that if either $\upsilon \geq 0$ or $\upsilon \leq 0$ on $\ga$, $v\not\equiv 0$,  and 
    certain additional regularity assumptions hold, then in a neighbourhood of any $\Lambda_q$ there are infinitely many discrete eigenvalues of $\ha$   and their accumulation rate to the Landau levels is described in terms of the logarithmic capacity of the interaction support; cf. \cite{BM18, GKP16, P09, PR07} for similar results on the clustering of eigenvalues of Landau Hamiltonians on unbounded domains with Dirichlet, Neumann, and Robin boundary conditions.
    
Our main objective in this article is to obtain a deeper understanding of the spectral points $\Lambda_q$, $q \in \Z_+$, of the perturbed operator $\ha$;
in other words, we are interested in the fate of the Landau levels $\Lambda_q$ under $\delta$-potentials of strength $\upsilon$.
In particular, we would like to know what part of the infinite-dimensional eigenspace $\Ker\,(\ho-\Lambda_q)$ is transformed into an eigenspace $\Ker\,(\ha-\Lambda_q)$ 
under the singular perturbation $\upsilon \delta_\ga$. 

The analogous problem on the fate of Landau levels under {\it regular} perturbations of the Landau Hamiltonian $\ho$ was investigated earlier in \cite{KlRa09}. 
Roughly speaking, it was shown that for  any non-negative  potential $V \in L^\infty(\rd; \re)$, $V\not\equiv 0$, 
with $\|V\|_{L^\infty(\rd)} < 2b$ 
     one has
     \bel{kr6}
     {\rm ker}\,(\ho \pm V -\Lambda_q) = \left\{0\right\}.
     \ee
The assumption that $V$ is sign-definite is essential here. In fact, in \cite{KlRa09} it was also shown that for every $q \in \Z_+$,
     there exists a compactly supported $V \in L^\infty(\rd; \re)$ with $\|V\|_{L^\infty(\rd)} < b$ of non-constant sign, such that
     $$
    \dimker\,(\ho + V -\Lambda_q) = \infty.
     $$
 The key idea in the proof of \eqref{kr6} is to show $\Ker\,(\ho \pm V -\Lambda_q) \subset \Ker\,(\tqv)$ and $\Ker\,(\tqv) = \{ 0 \}$ if $\|V\|_{L^\infty(\rd)} < 2b$, where $\tqv : = p_q V p_q$
 is a Berezin-Toeplitz type operator and $p_q$ denotes the orthogonal projection onto the eigen\-space ${\rm ker}\,(\ho -\Lambda_q)$. 
     
In our treatment of the perturbed Landau Hamiltonian with a $\delta$-potential in \eqref{hv} a singular analogue of the Berezin-Toeplitz operator plays a key role; cf. the discussion below \eqref{80} for more details and references.
More precisely, if $\tau$ is the restriction operator onto $\Gamma$ we consider the operator 
$\tqaga := (\tau p_q)^* \upsilon  (\tau p_q)$ and in our main results we show that the analysis of 
${\rm ker}\,(\hapm -\Lambda_q)$ can be reduced to that of  ${\rm ker}\,(\tqaga)$. Namely, under the definiteness assumption 
$\upsilon \geq 0$ we prove in Theorem \ref{th2} that
    \begin{equation*}
    \Ker\,(\tqaga) \subset \Ker\,(\hapm -\Lambda_q), \quad q \in \Z_+,
     \end{equation*}
     and $0\leq\dimker\,(\hapm -\Lambda_q) -  \dimker\,(\tqaga) < \infty$ for all $q \in \Z_+$. Furthermore,
     if $\Vert\upsilon\Vert_{L^p(\ga)}$ is not too large it turns out that 
     \begin{equation*}
     \Ker\,(\hapm -\Lambda_q) = \Ker\,(\tqaga),\quad q \in \Z_+.
     \end{equation*}
     As we will see, for $\upsilon \geq 0$ the kernel of $\tqaga$ consists of eigenfunctions of $H_0$ for $\Lambda_q$ which vanish on the support of $\upsilon$.  Intuitively, it is clear that such functions $u \in {\rm ker}\,(\tqaga)$ are also eigenfunctions of $H_\upsilon$, as in this case one formally has $\upsilon \delta_\Gamma u = 0$, i.e. the singular interaction does not have an effect on $u$, and hence $H_\upsilon u = H_0 u = \Lambda_q u$. This allows one to show with the help of \cite[Lemma~3.7]{BeExHoLo20}  that the kernel of $T_q(\upsilon\delta_\Gamma)$ is finite-dimensional under the assumption that $\upsilon$ is strictly positive.
     Moreover, this connection provides a direct link to nodal sets for eigenfunctions of $H_0$ and the non-emptiness of $\Ker \, (H_\upsilon-\Lambda_q)$. The above observation means, in particular, that for all $\upsilon \in L^p(\Gamma)$ one has $\Ker\,(\hapm -\Lambda_q) \neq \{ 0 \}$, whenever $\Gamma$ is contained in a nodal set of an eigenfunction of $H_0$. This is in strong contrast to the case of regular potentials; cf. \eqref{kr6}.
      Additionally, for the first Landau level $\Lambda_0$ we find 
      ${\rm ker}\,(T_0(\upsilon \delta_\ga)) = \{0\}$ in Theorem~\ref{th1}, which  leads to
      $$
     \Ker\,(\ha -\Lambda_0)=\{0\}.
      $$
      
      At present it is not clear if $\dimker\,(\tqaga)$ can be further estimated for higher Landau levels and general curves $\ga$. 
      However, we find it worthwhile to discuss the special case that 
      $\ga = \cir$ is a circle of radius $r \in (0,\infty)$. In this situation we find that
      \begin{equation*}
     \dimker\,(\tqocir) \leq q, \quad q = 1,2\ldots,
     \end{equation*}
     and the radii $r \in (0,\infty)$ for which $\dimker(\tqocir) \geq 1$ form an infinite and discrete set; cf. Theorem \ref{th4}. The idea to prove these results is again to study when a circle is a nodal set for an eigenfunction of $H_0$ and use the fact, that this can be characterized explicitly in terms of zeros of Laguerre polynomials.
     Translating this observation to the spectral points $\Lambda_q$ of the perturbed Landau operator \eqref{hv} leads to a precise understanding of the fate of 
     Landau levels under $\delta$-perturbations supported on circles. For example, if $\upsilon \not\equiv 0$ and $\upsilon \geq 0$ on some non-empty open subset of $\Gamma$, then $\Lambda_q$ can only be an eigenvalue of finite multiplicity; cf. Section~\ref{section_circle} for details.
     
     The article is organized as follows. In the next section we introduce the Landau Hamiltonian perturbed by singular $\delta$-interactions. In Section~\ref{s2} we formulate our main results. Section \ref{s3} contains some auxiliary facts from the spectral theory of the Landau Hamiltonian.
   Finally, in Section \ref{s4} we prove our main theorems.
    \vskip 0.2cm\noindent\\
    
    {\bf Note by J. Behrndt, M. Holzmann, and V. Lotoreichik}. 
    Our coauthor Georgi Raikov passed away unexpectedly on March 9, 2021, while the work on this manuscript was in its active phase. The topics in the present paper result from various discussions with Georgi dating back to 2018 and the first draft of this paper was written by him. When preparing the final text it was our aim to preserve Georgis
    original handwriting and genuine style. This paper is a tribute to the memory of Georgi Raikov, an influential mathematician, respected colleague, and good friend. We will miss him.

   \vskip 0.2cm\noindent\\
    {\bf Acknowledgements}. 
    We are indebted to Vincent Bruneau and Grigori Rozenblum for very helpful discussions and literature hints. 
    We are also particularly grateful to Lilia Simeonova who kindly provided us various handwritten notes and additional comments by Georgi on earlier versions of this manuscript. Furthermore, we would like to thank the anonymous referee for a very careful reading of our manuscript and for pointing out various improvements.
    \vskip 0.15cm\noindent
    J. Behrndt and M. Holzmann gratefully acknowledge financial support by the Austrian Science Fund (FWF): P 33568-N.
    V. Lotoreichik was supported 
    	by the Czech Science Foundation project 21-07129S. 
    	This publication is based upon work from COST Action CA 18232 
MAT-DYN-NET, supported by COST (European Cooperation in Science and 
Technology), www.cost.eu.

\section{Landau Hamiltonians with $\delta$-interactions supported on curves}
\label{s1} \setcounter{equation}{0}
Let $b>0$ be a  constant scalar magnetic field. Then
$$
A(x) := \frac{b}{2}(-x_2,x_1), \quad x = (x_1,x_2) \in \rd,
$$
is a magnetic potential which generates $b$, i.e.
$$
b  = \frac{\partial A_2}{\partial x_1} -  \frac{\partial A_1}{\partial x_2}.
$$
Denote by
$$
\Pi(A) = (\Pi_1(A),\Pi_2(A)) : = -i\nabla - A
$$
the magnetic gradient. In the following, for $\ell = (\ell_1,\ell_2) \in \Z_+^2$ with $\Z_+ = \{0,1,2,\dots\}$ the notations $|\ell| := \ell_1+\ell_2$ and $\Pi(A)^\ell := \Pi_1(A)^{\ell_1} \Pi_2(A)^{\ell_2}$ are used. For an open non-empty set $\Omega \subset \rd$ and an index $s \in \Z_+$ introduce the magnetic Sobolev spaces
$$
{\rm H}_A^s(\Omega) := \left\{u \in \cD'(\Omega) \,| \, \Pi(A)^\ell u \in L^2(\rd), \;\ell \in \Z_+^2, \;  0 \leq |\ell| \leq s\right\}
$$
with a norm defined by
$$
\|u\|^2_{{\rm H}_A^s(\Omega)} : = \sum_{\ell \in \Z_+^2 : 0 \leq |\ell| \leq s} \int_\Omega |\Pi(A)^\ell u |^2 dx.
$$
Throughout this paper it is assumed that $\ga \subset \rd$ is a $C^{1,1}$-smooth Jordan curve, i.e. a closed simple curve which is mapped onto the unit circle by a $C^{1,1}$-smooth diffeomorphism. Let ${\rm H}^{1/2}(\ga)$ be the $L^2$-based Sobolev space of order $1/2$  on $\ga$.

The Dirichlet trace operator $\tau : {\rm H}^1_A(\rd) \to {\rm H}^{1/2}(\ga)$ is the continuous extension of the restriction map
 $$
 {\rm H}^1_A(\rd)\cap C(\rd)\ni u \mapsto u_{|\ga}.
$$
Assume that $\upsilon \in L^p(\ga;\re)$ with $p>1$. Denote by $\ha$ the self-adjoint operator generated in $L^2(\rd)$ by the 
symmetric, densely defined, lower-bounded, and closed quadratic form 
\begin{equation}\label{eq:form}
\int_{\rd}|\Pi(A)u|^2dx + \int_{\ga}\upsilon |\tau u|^2\,ds, \quad u \in {\rm H}_A^1(\rd);
\end{equation}
cf. Appendix~\ref{app:form}.
In particular, for $\upsilon=0$ one obtains  
$$
\ho = \Pi_1(A)^2 + \Pi_2(A)^2 = (-i\nabla - A)^2,
$$
which is the {\em Landau Hamiltonian}, self-adjoint on ${\rm H}_A^2(\rd)$ (see, e.g., \cite[Appendix~A]{GeMaSj91}), and essentially self-adjoint on $C_0^\infty(\rd)$ (see \cite[Theorem 2]{LeSi81}). As mentioned in the introduction one has
$$
\sigma(\ho) = \sigma_{\rm ess}(\ho) = \bigcup_{q \in \Z_+} \left\{\Lambda_q\right\},
$$
where $\Lambda_q := b(2q+1)$, $q \in \Z_+$, are the {\em Landau levels} which are eigenvalues of $\ho$ of infinite multiplicity (see \cite{AvHeSi78,Fo28, La30}). In particular,
$$
\inf \sigma(\ho) = \Lambda_0 = b>0.
$$
Note that integration by parts allows to find an explicit characterization of $H_\upsilon$. Denote by $\oin$ and $\oex$ the interior and the exterior of $\ga$, respectively, and by $\nu$ the unit normal vector on $\ga$  pointing outwards of $\oin$. Then one can show in the same way as in \cite[Section 4]{BeExHoLo20}
that
\begin{equation} \label{def_H_ups}
  \begin{split}
    (H_\upsilon u)_{\natural} &=  (-i \nabla - A)^2 u_\natural \quad \text{ for } \natural = {\rm in}, {\rm ex}\\
    \gD( H_\upsilon) &= \bigg\{ u \in {\rm H}_A^1(\rd)| (-i \nabla - A)^2 u_\natural \in L^2(\Omega_\natural) \text{ for } \natural = {\rm in}, {\rm ex}, \\
    &\qquad \qquad \qquad \qquad \qquad\qquad\qquad  \frac{\partial u_{\rm ex}}{\partial \nu} - \frac{\partial u_{\rm in}}{\partial \nu} = \upsilon u \,\,\text{on}\,\,\Gamma \bigg\}.
  \end{split}
\end{equation}

In the next lemma it is shown that the difference of the resolvents of $\ho$ and $\ha$ is compact, which implies that the essential spectra of $\ho$ and $\ha$ coincide. In order to formulate the lemma, define for $\lambda >-b$ the operator
\begin{equation}\label{ggg}
\gae: = |\upsilon|^{1/2} \tau  (\ho + \lambda)^{-1/2} :L^2(\rd) \to L^2(\ga).
\end{equation}

\begin{lemma} \label{lemma_resolvent}
Let $\lambda > -b$ and set $J_\upsilon := {\rm sign}\, \upsilon$. Then $\gae$ is compact and there exists $\lambda_0 > -b$ such that for all $\lambda > \lambda_0$ the resolvent difference of $\ho$ and $\ha$ is a compact 
operator in $L^2(\rd)$ and admits the 
factorization
\begin{equation*}
 \begin{split}
  &(\ha + \lambda)^{-1} - (\ho + \lambda)^{-1} \\
  &\quad=-(H_0+\lambda)^{-1/2}\gae^* J_\upsilon \gae(I +\gae^* J_\upsilon \gae)^{-1}(H_0+\lambda)^{-1/2}.
 \end{split}
\end{equation*}
In particular, one has 
\begin{equation}\label{lolo}
\sigma_{\rm ess}(\ho) = \sigma_{\rm ess}(\ha) = \bigcup_{q \in \Z_+} \left\{\Lambda_q\right\}.
\end{equation}
\end{lemma}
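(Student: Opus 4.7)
The plan is to proceed in three steps, establishing first the compactness of $\gae$, then the bounded invertibility needed for the Birman--Schwinger-type factorization, and finally the resolvent identity itself.

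First, I would verify that $\gae$ is compact for every $\lambda>-b$. The operator $(\ho+\lambda)^{-1/2}$ is bounded from $L^2(\rd)$ into the form domain ${\rm H}_A^1(\rd)$, the trace $\tau\colon{\rm H}_A^1(\rd)\to H^{1/2}(\ga)$ is continuous, and, since $\ga$ is a compact one-dimensional manifold, the embedding $H^{1/2}(\ga)\hookrightarrow L^r(\ga)$ is compact for every $r\in[2,\infty)$. Choosing $r$ so that $1/r+1/(2p)=1/2$, H\"older's inequality implies that multiplication by $|\upsilon|^{1/2}\in L^{2p}(\ga)$ is bounded from $L^r(\ga)$ to $L^2(\ga)$. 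The composition of these maps is the compact operator $\gae$.

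Next, I would show that $\|\gae\|\to 0$ as $\lambda\to\infty$, which guarantees the bounded invertibility of $I+\gae^*J_\upsilon\gae$ on $L^2(\rd)$ for all sufficiently large $\lambda$. Combining the elementary estimates $\|(\ho+\lambda)^{-1/2}\|_{L^2\to L^2}\le\lambda^{-1/2}$ and $\|(\ho+\lambda)^{-1/2}\|_{L^2\to{\rm H}_A^1}\le C$ and interpolating yields $\|(\ho+\lambda)^{-1/2}\|_{L^2\to{\rm H}_A^s}\le C_s\lambda^{-(1-s)/2}$ for $s\in(1/2,1)$. Applying the trace embedding ${\rm H}_A^s(\rd)\to H^{s-1/2}(\ga)\hookrightarrow L^r(\ga)$, valid for $s$ close enough to $1$ given $p>1$, and then the bounded multiplication by $|\upsilon|^{1/2}$, gives $\|\gae\|\le C'\lambda^{-(1-s)/2}\to 0$. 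Hence there is $\lambda_0>-b$ such that $\|\gae^*J_\upsilon\gae\|<1$ for every $\lambda>\lambda_0$.

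Finally, for such $\lambda$ I would set
\[
R(\lambda):=(\ho+\lambda)^{-1}-(\ho+\lambda)^{-1/2}\gae^*J_\upsilon\gae\bigl(I+\gae^*J_\upsilon\gae\bigr)^{-1}(\ho+\lambda)^{-1/2}
\]
and verify that $R(\lambda)=(\ha+\lambda)^{-1}$ directly from the definition of $\ha$ via its quadratic form. Concretely, for $u\in L^2(\rd)$ I would check that $w:=R(\lambda)u$ lies in ${\rm H}_A^1(\rd)$ and satisfies
\[
\int_{\rd}\Pi(A)w\cdot\overline{\Pi(A)\phi}\,dx+\int_\ga\upsilon\,\tau w\,\overline{\tau\phi}\,ds+\lambda\int_{\rd}w\bar\phi\,dx=\int_{\rd}u\bar\phi\,dx
\]
for every $\phi\in{\rm H}_A^1(\rd)$; by the first representation theorem this identifies $R(\lambda)$ with $(\ha+\lambda)^{-1}$. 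Once this factorization is in place, the resolvent difference is a composition of bounded maps sandwiching the compact operator $\gae^*J_\upsilon\gae$, and is therefore compact; Weyl's theorem on the invariance of the essential spectrum under compact resolvent perturbations then yields \eqref{lolo}. The main obstacle is the form-level verification of the resolvent identity: one has to realize $\gae^*$ as the extension by duality of $(\ho+\lambda)^{-1/2}\tau^*$ and keep careful track of the fractional power $(\ho+\lambda)^{-1/2}$ while evaluating the perturbed form against $\tau\phi\in H^{1/2}(\ga)$.
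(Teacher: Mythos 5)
Your proposal is correct and follows essentially the same route as the paper: compactness of $\gae$ via the trace embedding and H\"older, a Birman--Schwinger-type factorization of $(\ha+\lambda)^{-1}-(\ho+\lambda)^{-1}$ through $(I+\gae^*J_\upsilon\gae)^{-1}$, and Weyl's theorem for \eqref{lolo}; your form-level identification of $R(\lambda)$ with $(\ha+\lambda)^{-1}$ is just the polarized version of the paper's identity \eqref{equation_forms} and its factorization $\ha+\lambda=M_\upsilon(\lambda)^*M_\upsilon(\lambda)$. The only (harmless) deviation is that you obtain invertibility of $I+\gae^*J_\upsilon\gae$ from $\|\gae\|\to 0$ as $\lambda\to\infty$ via interpolation, whereas the paper gets it for all $\lambda>-\inf\sigma(\ha)$ from the equivalence of $I+\gae^*J_\upsilon\gae>0$ with the positivity of the form of $\ha+\lambda$, combined with compactness.
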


\begin{proof}
Note first that the operator $\gae$ in \eqref{ggg} depends only on $|\upsilon|$ but not on the sign of $\upsilon$.
The assumption $\upsilon \in L^p(\ga;\re)$ with $p>1$, and the compactness of the trace
${\rm H}^1_A(\rd) \rightarrow L^{r}(\ga)$ for any $r > 1$ (see~\cite[Section~2.6, Theorem~6.2]{N}), easily imply that 
$\gae$ is compact, and
\begin{equation*}
 \begin{split}
  \|\gae\|^2&=\sup_{0\neq w\in L^2(\rd)}\frac{\Vert |\upsilon|^{1/2} \tau  (\ho + \lambda)^{-1/2} w\Vert^2_{L^2(\ga)}}{\Vert w \Vert^2_{L^2(\rd)}}\\
            &=\sup_{0 \neq u \in {\rm H}_A^1(\rd)} \frac{\Vert |\upsilon|^{1/2} \tau u \Vert^2_{L^2(\ga)}}{\Vert (\ho + \lambda)^{1/2} u \Vert^2_{L^2(\rd)}}\\
            &=\sup_{0 \neq u \in {\rm H}_A^1(\rd)} \frac{\int_{\ga}|\upsilon\vert \vert\tau u|^2ds}{\int_{\rd} (|\Pi(A)u|^2 + \lambda |u|^2)\,dx},
 \end{split}
\end{equation*}
so that the H\"{o}lder inequality leads to the estimate
    \bel{70}
    \|\gae\|^2  \leq C_p(\lambda) \|\upsilon\|_{L^p(\ga)}
     \ee
     with
     \bel{74}
     C_p(\lambda)  := \sup_{0 \neq u \in {\rm H}_A^1(\rd)} \frac{\left(\int_{\ga}|\tau u|^{2p'}ds\right)^{1/p'}}{\int_{\rd} (|\Pi(A)u|^2 + \lambda |u|^2)\,dx}, \quad p':= \frac{p}{p-1}, \quad \lambda > -b.
     \ee
   Set
    $$
    J_\upsilon = {\rm sign}\,{\upsilon} := \left\{
    \begin{aligned}
     &  \upsilon |\upsilon|^{-1}   && \text{if } \;  \upsilon \neq 0, \\
   & 0&& \text{if } \;  \upsilon = 0.
   \end{aligned}
   \right.
   $$
     Let $\lambda >-b$. Then  one has for $u \in L^2(\mathbb{R}^2)$, $w := (H_0+\lambda)^{-1/2} u \in {\rm H}^1_A(\rd)$, and the self-adjoint operator $\gae^* J_\upsilon \gae$
     \begin{equation} \label{equation_forms}
       \big\langle (I +\gae^* J_\upsilon \gae) u, u \big\rangle_{L^2(\mathbb{R}^2)} = \int_{\rd}\big( |\Pi(A)w|^2 + \lambda | w |^2 \big)dx + \int_{\ga}\upsilon |\tau w|^2\,ds
     \end{equation}
     and hence, with~\eqref{eq:form} one concludes that
     \bel{1}
     \lambda > - \inf \sigma(\ha)
     \ee
     is equivalent to
     \bel{2}
     I +\gae^* J_\upsilon \gae > 0.
     \ee
    Assume in the following that $\lambda > -\inf \sigma(\ha)$ is fixed. Due to the compactness of $\gae^* J_\upsilon \gae $ it follows from \eqref{2} that  the operator $$I +\gae^* J_\upsilon \gae : L^2(\rd) \to L^2(\rd)$$ is boundedly invertible. Thus, we have
     \begin{equation} \label{representation_H_v}
     \ha+\lambda  = M_\upsilon(\lambda)^* M_\upsilon(\lambda)
     \end{equation}
     where the operator
     $$
      M_\upsilon(\lambda) : = (I +\gae^* J_\upsilon \gae)^{1/2} (H_0+\lambda )^{1/2}, \quad \gD(M_\upsilon(\lambda)) = {\rm H}^1_A(\mathbb{R}^2),
      $$
      is closed in $L^2(\rd)$, as it is a product of a bijective operator in $L^2(\mathbb{R}^2)$ and $(H_0+\lambda )^{1/2}$, which is bijective from ${\rm H}^1_A(\mathbb{R}^2)$ to $L^2(\mathbb{R}^2)$.  The representation in~\eqref{representation_H_v} can be seen with the help of the quadratic form in~\eqref{eq:form} associated with $\ha$ and a similar calculation as in~\eqref{equation_forms}.
      Therefore, the operators $\ha+\lambda $ and, hence, $\ha$ are self-adjoint on
      $$
      \gD(\ha) := \left\{u \in {\rm H}^1_A(\rd)\, | \, M_\upsilon(\lambda)u \in \gD(M_\upsilon(\lambda)^*)\right\}
      $$
      (see \cite[Theorem X.25]{ReSiII75}). In the above construction we have obtained an alternative characterisation of the operator domain of $\ha$; cf. \eqref{def_H_ups}. Moreover,
      \begin{equation*} 
      \begin{split}
      (\ha &+ \lambda)^{-1} - (\ho + \lambda)^{-1}  \\
      &=(H_0+\lambda)^{-1/2}\big(I + G_\upsilon(\lambda)^* J_\upsilon G_\upsilon(\lambda)\big)^{-1}(H_0+\lambda)^{-1/2} - (H_0 +\lambda)^{-1}\\
             &= -(H_0+\lambda)^{-1/2}\gae^* J_\upsilon \gae(I +\gae^* J_\upsilon \gae)^{-1}(H_0+\lambda)^{-1/2}.
      \end{split}
      \end{equation*}
      Bearing in mind the compactness of the operator $\gae^* J_\upsilon \gae$, and applying a suitable version of the Weyl theorem on the invariance of the essential spectrum 
      (see, e.g., \cite[Chapter 9, Section 1, Theorem 4]{BiSo87}), we obtain \eqref{lolo}.
      \end{proof}

    Consider the operator 
      \bel{80}
     \tqaga := (\tau p_q)^* \upsilon  (\tau p_q), \quad q \in \Z_+,
     \ee
    which can be viewed as a singular analogue of a Berezin-Toeplitz operator.  The relation of Landau Hamiltonians coupled with regular potentials $V$ and the Berezin-Toeplitz type operators $\tqv  = p_q V p_q$ was discovered in \cite{R90} and further studied in many publications. Singular Toeplitz operators as in~\eqref{80} play an important role in modern operator theory and are also of independent interest. They were already considered in \cite{AR09}, and in connection with magnetic Laplacians with different types of boundary conditions these types of operators appear in
    \cite{GKP16,GS17,PR07}; we also refer the reader 
    to \cite{AGH18,BM18,BR20,CE20, ERV19,LR15, PRV13,RV18,RV19} for some other
    recent related works in this context.   
    Note that the operator $\tqaga$ corresponds to the quadratic form 
    \begin{equation} \label{form_T_q}
      t_q(\upsilon \delta_\Gamma)[u] := \int_\Gamma \upsilon(x) |u(x)|^2 d s, \quad u \in p_q L^2(\mathbb{R}^2).
    \end{equation}

    \begin{lemma} \label{lemma_kernel}
    The operator $\tqaga$, $q \in \Z_+$, is  a compact self-adjoint operator in $p_q L^2(\mathbb{R}^2)$.
     For $\lambda > -b$ and $\gae$ in \eqref{ggg} one has
     \begin{equation} \label{formula_Toeplitz}
      \tqaga = (\Lambda_q + \lambda) p_q \gae^*J_\upsilon \gae p_q.
    \end{equation}
    Moreover, if $\upsilon$ has a constant sign, then there exists $\lambda_0 > -b$ such that for all $\lambda > \lambda_0$  
        \begin{equation} \label{representation_kernel}
      \Ker \, (p_q Q_\upsilon(\lambda) p_q) = \Ker \,(\tqaga),
    \end{equation}
    where $Q_\upsilon(\lambda) := (\ha + \lambda)^{-1} - (H_0+\lambda)^{-1}$.
    \end{lemma}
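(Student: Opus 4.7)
\emph{Plan.} First I would establish the factorization~(\ref{formula_Toeplitz}) by a direct computation. Writing $G_\upsilon(\lambda) = |\upsilon|^{1/2}\tau(H_0+\lambda)^{-1/2}$, and using the spectral identity $(H_0+\lambda)^{-1/2} p_q = (\Lambda_q+\lambda)^{-1/2} p_q$ (which follows from $H_0 p_q = \Lambda_q p_q$) together with the pointwise equality $|\upsilon|^{1/2} J_\upsilon |\upsilon|^{1/2} = \upsilon$ on $\Gamma$, one gets
\begin{equation*}
  p_q G_\upsilon(\lambda)^* J_\upsilon G_\upsilon(\lambda) p_q = (\Lambda_q+\lambda)^{-1}(\tau p_q)^*\upsilon (\tau p_q) = (\Lambda_q+\lambda)^{-1} T_q(\upsilon\delta_\Gamma),
\end{equation*}
which is~(\ref{formula_Toeplitz}). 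Self-adjointness and compactness of $T_q(\upsilon\delta_\Gamma)$ in $p_q L^2(\rd)$ are then immediate, since $J_\upsilon\in L^\infty(\Gamma)$ is bounded and self-adjoint while $G_\upsilon(\lambda)$ is compact by Lemma~\ref{lemma_resolvent}, so the right-hand side is manifestly a compact self-adjoint operator.

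For~(\ref{representation_kernel}), set $K := G_\upsilon(\lambda)^* J_\upsilon G_\upsilon(\lambda)$. Sandwiching the resolvent formula from Lemma~\ref{lemma_resolvent} between two copies of $p_q$ and using the same spectral identity yields
\begin{equation*}
  p_q Q_\upsilon(\lambda) p_q = -(\Lambda_q+\lambda)^{-1}\, p_q K(I+K)^{-1} p_q,
\end{equation*}
so the task reduces to proving $\Ker\bigl(p_q K(I+K)^{-1} p_q\bigr) = \Ker(p_q K p_q)$ on $p_q L^2(\rd)$. I would choose $\lambda_0>-b$ so that for $\lambda>\lambda_0$ the operator $I+K$ is boundedly invertible: for $\upsilon\ge 0$ one has $K\ge 0$ and any $\lambda>-b$ works; for $\upsilon\le 0$ the estimate~(\ref{70}) forces $\|K\|\le \|G_\upsilon(\lambda)\|^2\to 0$ as $\lambda\to\infty$, hence $I+K\ge (1-\|K\|)I>0$ for $\lambda$ large.

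The remaining ingredient is a general observation. Under the sign assumption on $\upsilon$, the compact self-adjoint operator $K$ is sign-definite, and by functional calculus so is $K(I+K)^{-1}$; moreover $\Ker K = \Ker\bigl(K(I+K)^{-1}\bigr)$, because $K$ and $(I+K)^{-1}$ commute and $(I+K)^{-1}$ is bijective. Now for any sign-definite self-adjoint operator $S$ on $L^2(\rd)$ and any $u\in p_q L^2(\rd)$, the identity $p_q S p_q u = 0$ implies $\langle Su,u\rangle = 0$, hence $|S|^{1/2} u = 0$, hence $Su=0$. Applying this to $S=K$ and to $S=K(I+K)^{-1}$ gives
\begin{equation*}
  \Ker(p_q K p_q) = \Ker K \cap p_q L^2(\rd) = \Ker\bigl(K(I+K)^{-1}\bigr) \cap p_q L^2(\rd) = \Ker\bigl(p_q K(I+K)^{-1} p_q\bigr),
\end{equation*}
which combined with~(\ref{formula_Toeplitz}) is exactly~(\ref{representation_kernel}). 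The main obstacle I anticipate is the case $\upsilon\le 0$: one must calibrate $\lambda_0$ so that $I+K$ is invertible \emph{and} the sign of $K(I+K)^{-1}$ coincides with that of $K$, and this is precisely what the $L^p$-estimate~(\ref{70}) on $G_\upsilon(\lambda)$ delivers as $\lambda\to\infty$.
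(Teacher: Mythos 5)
Your proposal is correct and follows essentially the same route as the paper: derive \eqref{formula_Toeplitz} from $(H_0+\lambda)^{-1/2}p_q=(\Lambda_q+\lambda)^{-1/2}p_q$, sandwich the factorized resolvent difference of Lemma \ref{lemma_resolvent} between two copies of $p_q$, and use sign-definiteness plus bijectivity of $I+\gae^*J_\upsilon\gae$ to identify the kernel of the compressed operator with $\Ker(\tqaga)$. The only (harmless) deviations are that the paper performs the computation at the level of quadratic forms with $K_\upsilon(\lambda)=\gae^*\gae\ge 0$ and obtains the invertibility of $I+\gae^*J_\upsilon\gae$ for $\upsilon\le 0$ from the semiboundedness of $H_{-\upsilon}$ (the equivalence of \eqref{1} and \eqref{2}) rather than from the decay of $\|\gae\|$ as $\lambda\to\infty$.
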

    \begin{proof}
    Recall that $J_\upsilon$ denotes the sign of $\upsilon$. A simple calculation involving the form $t_q(\upsilon \delta_\Gamma)$ shows for any $u \in p_q L^2(\mathbb{R}^2)$ and $\lambda > -b$ that
     \begin{equation*}
      \begin{split} 
        t_q(\upsilon \delta_\Gamma)[u] &= \int_\Gamma \upsilon(x) |u(x)|^2 d s \\
        &= (\Lambda_q+\lambda)\int_\Gamma \upsilon(x) |(\tau(H_0+\lambda)^{-1/2}u)(x)|^2ds \\
        &= (\Lambda_q+\lambda) \big\langle J_\upsilon \gae p_q u, \gae p_q u \big\rangle_{L^2(\Gamma)}.
      \end{split}
    \end{equation*}
    Thus, we get the representation \eqref{formula_Toeplitz}, which also shows with Lemma~\ref{lemma_resolvent} that $\tqaga$ is compact and self-adjoint. Moreover,~\eqref{formula_Toeplitz} immediately implies
    \begin{equation} \label{formula_kernel_Toeplitz}
      \Ker \,(\tqaga) = \ker (p_q \gae^*J_\upsilon \gae p_q).
    \end{equation}
   It remains to prove equation~\eqref{representation_kernel}. For this, assume that $\upsilon$ has a constant sign, i.e. $\pm \upsilon \geq 0$, and fix $\lambda > -b$ sufficiently large such that $\ha + \lambda$ is strictly positive.
    Then the equivalence of~\eqref{1} and~\eqref{2} shows that $I +\gae^* J_\upsilon \gae$ is strictly positive and bounded. Thus, we get for $K_\upsilon( \lambda) := \gae^* \gae \geq 0$ by Lemma~\ref{lemma_resolvent} that 
    \begin{equation*}
      \begin{split}
        \big\langle p_q & Q_\upsilon(\lambda) p_q u, u \big\rangle_{L^2(\mathbb{R}^2)}
        = -J_\upsilon  (\Lambda_q+\lambda)^{-1}\big\langle K_\upsilon( \lambda) (I + J_\upsilon K_\upsilon( \lambda))^{-1}p_q u, p_qu\big\rangle_{L^2(\mathbb{R}^2)} \\
        &= -J_\upsilon (\Lambda_q+\lambda)^{-1}\big\langle(I +J_\upsilon  K_\upsilon( \lambda))^{-1} (K_\upsilon( \lambda))^{1/2}p_q u, (K_\upsilon( \lambda))^{1/2}p_qu\big\rangle_{L^2(\mathbb{R}^2)}.
      \end{split}
    \end{equation*}
    Taking the bijectivity of $I +\gae^* J_\upsilon \gae$ and~\eqref{formula_kernel_Toeplitz} into account, this leads to~\eqref{representation_kernel}.
\end{proof}

    Via the form $t_q(\upsilon \delta_\Gamma)$ one gets also another interesting characterization of $\Ker \,(\tqaga)$ in the case that $\upsilon \geq 0$ almost everywhere on $\Gamma$, as then it follows from~\eqref{form_T_q} that $\tqaga$ is a non-negative operator and that $\Ker \,(\tqaga) \neq \{ 0 \}$ if and only if there exists a $u \in p_q L^2(\mathbb{R}^2)$ such that $\upsilon u = 0$ on $\Gamma$. This yields 
    \begin{equation} \label{kernel_Toeplitz_operator}
      \Ker \,(\tqaga) = \{ u \in p_q L^2(\mathbb{R}^2): u = 0 \text{ on } \rm{supp}\, \upsilon \},
    \end{equation}
    where $\rm{supp}\, \upsilon$ denotes the essential support of $\upsilon$. In other words, this means that $\Ker \;( \tqaga) \neq \{ 0 \}$ if and only if the essential support of $\upsilon$ is contained in a nodal set of an eigenfunction of $H_0$ for $\Lambda_q$. Furthermore, the dimension of $\Ker \,( \tqaga)$ is equal to the number of linearly independent eigenfunctions $u$ of $H_0$ for $\Lambda_q$ such that $u = 0 $ on $ \rm{supp}\, \upsilon$. For studies on nodal sets of eigenfunctions we refer the reader to, e.g., \cite{HHHO99,HH09,HHT09,NT10}. Clearly, a similar consideration is true if $\upsilon \leq 0$ almost everywhere on $\Gamma$.

    \section{Main results}\label{s2} \setcounter{equation}{0}

In this section we formulate our main results on the fate of Landau levels under $\delta$-perturbations supported on curves. 
The case of general $C^{1,1}$-smooth Jordan curves is treated first and, roughly speaking, we show that the analysis of the eigenspaces 
${\rm ker}\,(\hapm -\Lambda_q)$ of the perturbed Landau Hamiltonian can be reduced to the analysis of the kernels  ${\rm ker}\,(\tqaga)$ of the Berezin-Toeplitz type operators defined in~\eqref{80}.
This connection is of independent interest, but also turns out to be useful for a more explicit analysis of the Landau levels. We illustrate this 
for the special case of $\delta$-perturbations supported on circles.

\subsection{Singular interactions supported on $C^{1,1}$-smooth Jordan curves}

Throughout this subsection let $\ga \subset \rd$ be a $C^{1,1}$-smooth Jordan curve 
and assume that $\upsilon \in L^p(\ga;\re)$ for some $p>1$ is such that $\upsilon \geq 0$ on $\ga$ and $\upsilon \not\equiv 0$.
Our first theorem contains two independent statements which concern the operators $\ha$ and $\ham$ respectively. 
The proof of Theorem \ref{th2} can be found in Subsection \ref{ss2}.

\bethl{th2}
Let $q \in \Z_+$ and let $\tqaga$ be the  operator of Berezin-Toeplitz type in {\rm \eqref{80}}.
\begin{itemize}
\item [{\rm (i)}] There holds
\begin{equation*}
{\rm ker}\,(\tqaga) \subset   {\rm ker} \,(\hapm -\Lambda_q).
    \end{equation*}
 \item [{\rm (ii)}] 
 There exist $n_q^\pm \in \Z_+$ depending on $\upsilon$ such that
    \bel{23a}
 \dimker\,(\hapm -\Lambda_q) \leq \dimker\,(\tqaga) +n_q^\pm
    \ee
  and for $q=0$ one can choose $n_0^+ = 0$.
  \item [{\rm (iii)}] There  exist $\upsilon^\pm_q>0$ such that $\|\upsilon\|_{L^p(\ga)} < \upsilon^\pm_q$ implies
     \bel{24}
 \Ker\,(\hapm -\Lambda_q) = \Ker\,(\tqaga)
    \ee
    and for $q=0$ one can choose $\upsilon_0^+= \infty$. Moreover, there is a constant $c > 0$ independent of $b$ and $q$ such that
    \begin{equation} \label{estimate_v_q_pm}
    	\upsilon_q^+ \ge \frac{2bc}{(\Lambda_q+1)(\Lambda_{q-1}+1)}\quad\text{and}\quad\upsilon_q^- \ge \frac{2bc}{2b+(\Lambda_q+1)(\Lambda_{q+1}+1)}.
    \end{equation}
\end{itemize}
\ethe

\begin{remark} \label{remark_Toeplitz} Writing \eqref{24}, we mean that $u \in \Ker\,(\hapm -\Lambda_q)$ implies $u = p_q u$, and
    $$
    u \in \Ker\,(\tqaga) \subset p_q L^2(\rd),
    $$
    and vice versa.
    A similar remark applies to all further inclusions of the same kind.     
    We also mention that the inequality~\eqref{23a} will be obtained by showing that $\Ker\,(\hapm -\Lambda_q)$ is the sum of $\Ker\,(\tqaga)$ 
    and a finite dimensional space.
 \end{remark}

 \begin{remark} 
   The definitions of the numbers $\upsilon^\pm_q$ in Theorem~\ref{th2}~(iii) are given in~\eqref{def_v_plus} and~\eqref{75}, respectively. Their precise value is not obvious. However, equation~\eqref{24} is also true, if one replaces $\upsilon_q^\pm$ by the lower bounds in~\eqref{estimate_v_q_pm}. Since $\Lambda_q = b(2q+1)$, we see that these lower bounds are decreasing in $q$; the same is true for $\upsilon^\pm_q$ defined in~\eqref{def_v_plus} and~\eqref{75}.
 \end{remark}

    \begin{remark}
      The operator $H_\upsilon$ can be introduced as a self-adjoint extension of the symmetric operator $S$ given by
      \begin{equation} \label{def_S}
		S u = (-i\nabla -A)^2u,\qquad 
		\gD(S) = \big\{u\in {\rm H}^2_A(\mathbb{R}^2)\,|\, u_{| \Gamma} = 0\big\},
	\end{equation}
	i.e. $S$ is the restriction of $H_0$ onto functions in ${\rm H}^2_A(\rd)$ that vanish on $\Gamma$; cf. \cite{BeExHoLo20} for the case $\upsilon \in L^\infty(\Gamma; \mathbb{R})$. In view of~\eqref{kernel_Toeplitz_operator}, if $u \in \Ker\,(\tqaga)$ for $\upsilon >0$ or $\upsilon<0$, then $u \in \Ker\,(S - \Lambda_q)$ and thus, as $H_\upsilon$ was defined as an extension of $S$, $u \in \Ker(H_\upsilon - \Lambda_q)$. Thus, the result of Theorem~\ref{th2}~(i) can  also be interpreted from an 
	extension theoretic point of view.
    \end{remark}

 \begin{remark}
   Considering~\eqref{23a} the question arises, if $\dimker\,(\tqaga)$ is finite, as then $\dimker\,(\hapm -\Lambda_q) < \infty$. If $\upsilon$ is strictly positive, i.e. if $\upsilon \geq c > 0$ everywhere on $\Gamma$, then by \cite[Lemma~3.7]{BeExHoLo20} and~\eqref{kernel_Toeplitz_operator} one indeed has $\dimker\,(\tqaga) < \infty$. 
 \end{remark}
    
    Theorem \ref{th2} reduces the analysis of $\Ker\,(\ha -\Lambda_q)$ to that of $\Ker\,(\tqaga)$. That is why our further results concern $\Ker\,(\tqaga)$.
    The situation is particularly simple for the Berezin-Toeplitz operator $\toaga$ as the next theorem shows. Its proof can be found in Subsection \ref{ss1}.

    \bethl{th1}
 For $q=0$ we have
 \bel{23b}
   \Ker\,(\toaga) = \{0\}.
   \ee
   \ethe

    Combining \eqref{23b} and \eqref{23a}--\eqref{24} with $q=0$ we obtain the following corollary.

     \becol{f2} We have
     $$
     \Ker\,(\ha -\Lambda_0) = \{0\}\quad\text{and} \quad \dimker\,(\ham -\Lambda_0) < \infty.
     $$
    Moreover, if $\|\upsilon\|_{L^p(\ga)}$ is sufficiently small, then
     $
     \Ker\,(\ham -\Lambda_0) = \{0\}.
     $
     \eco

The next remark concerns regular perturbations of the Landau Hamiltonian and the fate of the Landau levels as investigated earlier in \cite{KlRa09}.
     
\begin{remark} 
	Assume that $V \in L^\infty(\rd;\mathbb R)$ satisfies $V \not\equiv 0$, $V \geq 0$, and $\lim_{|x|\to \infty}V(x) = 0$.
    Then, applying the general scheme of the proof of Theorem \ref{th2} below, one can verify that
    \begin{equation} \label{equation_reg_pot1}
    \dimker\,(\ho \pm V -\Lambda_q) < \infty
    \end{equation}
    and 
    \begin{equation} \label{equation_reg_pot2}
      \Ker\,(\ho \pm V -\Lambda_q) = {\rm ker}\,(\tqv),
    \end{equation}
    if $\| V \|_{L^\infty(\mathbb{R}^2)} < V_q^\pm$ for some constants $V_q^\pm > 0$; here
     $
     \tqv  = p_q V p_q,
     $
     and $p_q$ is the orthogonal projection onto ${\rm ker}\,(\ho -\Lambda_q)$.

     The main idea of the proof of \cite[Theorem 1]{KlRa09} is to show that, under the assumptions $\|V\|_{L^\infty(\rd)} < 2b$ and $V \geq 0$, one has
     \bel{101}
     \Ker\,(\ho \pm V -\Lambda_q) \subset \Ker\,(\tqv), \quad q \in  \Z_+.
    \ee
     On the other hand, if additionally $V \not\equiv 0$, then the results of \cite{KlRa09,RaWa02} imply
     \bel{6} 
     {\rm ker}\,(\tqv) = \left\{0\right\},\quad q \in \Z_+,
     \ee
     and \eqref{kr6} follows from \eqref{101} and \eqref{6}. 
     
    If one compares~\eqref{equation_reg_pot1}--\eqref{equation_reg_pot2} (following our approach) with \eqref{101}--\eqref{6} (following the approach in \cite{KlRa09}), then the upper bounds $V_q^\pm$ for $\|V\|_{L^\infty(\rd)}$ in~\eqref{equation_reg_pot2} are not as explicit as the upper bound $2b$ in~\eqref{101}, but one gets in~\eqref{equation_reg_pot1} also results for potentials with $\|V\|_{L^\infty(\rd)} \geq 2b$.
\end{remark}     
     
\subsection{Singular interactions supported on circles} \label{section_circle}

Now we illustrate the above results for the special case that $\ga = \cir$ is a circle of radius $r \in (0,\infty)$. In this situation more explicit results on the structure 
of $\Ker\,(\tqocir)$  are obtained, as one can use an explicit basis of $\Ker (H_0-\Lambda_q)$ in polar coordinates. Using this and the simple characterization of $\Ker (\tqocir)$ from~\eqref{kernel_Toeplitz_operator}, it will turn out that $\Ker(\tqocir) \neq \{ 0 \}$ is equivalent to the fact 
that $br^2/2$ is a zero of a suitable Laguerre polynomial.
 Since~\eqref{kernel_Toeplitz_operator} implies $\Ker\,(\tqocir) = \Ker (T_q(\upsilon\delta_{\mathcal{C}_r}))$, if $\upsilon$ is strictly positive or negative everywhere on $\Gamma$, this yields further results on $\Ker \,(\hapm -\Lambda_q)$.
The obtained result  on $\Ker\,(\tqocir)$ is the following:

 \bethl{th4}
Let $q \in \N$, assume that $\ga = \cir$ is a circle of radius $r \in (0,\infty)$, 
and let $\tqocir$ be the corresponding operator of Berezin-Toeplitz type in {\rm \eqref{80}}.
\begin{itemize}
 \item [{\rm (i)}] For any $r \in (0,\infty)$ we have
    \bel{26}
\dimker\,(\tqocir) \leq   q.
    \ee
    \item [{\rm (ii)}] The set
$$
\cD_q := \left\{r \in (0,\infty) \,| \, \dimker\,(\tqocir) \geq 1\right\}
$$
is  infinite and discrete.
\end{itemize}
\ethe

Theorem \ref{th4} will be proved in Subsection \ref{ss4}.  Since~\eqref{kernel_Toeplitz_operator} implies the relation $\Ker\,(\tqocir) = \Ker (T_q(\upsilon\delta_{\mathcal{C}_r}))$, if $\upsilon$ is strictly positive, a combination of \eqref{23a} and \eqref{26} leads to the following corollary.
    \becol{f1}
     Let $q \in \N$, let $\ga = \cir$ be a circle of radius $r \in (0,\infty)$, and assume that
     $\upsilon \in L^p(\cC_r;\re)$ with $p>1$ satisfies $\upsilon \geq c$ on $\cir$ with some constant $c>0$. Then
    $$
     \dimker \,(\hapm -\Lambda_q) < \infty.
     $$
     \eco
     
\begin{remark}     
     For $q \in \N$ set
$$
\cD_{q,j} := \left\{r \in (0,\infty) \,| \, \dimker\,(\tqocir) = j\right\}, \quad j =1,\ldots,q,
$$
so that $\cD_q = \cup_{j=1}^q \cD_{q,j}$; note that the union stops at $j=q$ by Theorem~\ref{th4}~(i).
In the proof of Theorem \ref{th4}, we will describe the dimension of ${\rm ker}\,(\tqocir)$ in terms of the zeros of Laguerre polynomials of $q$-th degree (see \eqref{g30}--\eqref{31} below). If $q =1,2$, these zeros can be easily calculated, and we obtain explicitly the sets $\cD_q$ and their components $\cD_{q,j}$, namely
    \bel{81}
\cD_1 = \cD_{1,1} = \ \sqrt{(2/b)\N},
\ee
\bel{82}
\cD_2  = \sqrt{(2/b)((\N+1) - \sqrt{(\N+1)})} \cup \sqrt{(2/b)(\N + \sqrt{\N})} ,
\ee
\bel{83}
\cD_{2,2}  = \sqrt{(2/b)(\N^2 + \N)}, \quad \cD_{2,1} = \cD_2 \setminus \cD_{2,2},
\ee
where $b>0$ is the magnetic field.
\end{remark}

\begin{remark}
	Taking the explicit representation~\eqref{g6} of the orthonormal basis of ${\rm ran}\, (p_q) = {\rm ker}\,(H_0-\Lambda_q)$, $q\in\mathbb{N}$,
	into account, we conclude that $\Lambda_q$ is still an eigenvalue of the symmetric operator $S$ defined in \eqref{def_S}
	provided that $b r^2/2$ is a root of a Laguerre polynomial of $q$-th degree. Since $H_\upsilon$ is a self-adjoint extension of $S$ for all $\upsilon \in L^p(\Gamma;\mathbb{R})$, $\Lambda_q$ remains an eigenvalue of $H_\upsilon$ under the same assumption on $r > 0$.
\end{remark}

\section{Auxiliary results from the spectral theory of $H_0$}
\label{s3}  \setcounter{equation}{0}
In this section we recall several known facts about $H_0$ that are necessary for our considerations; the first part follows \cite[Section~4.2]{PR07}, while the second part on the basis of ${\rm ker}\,(\ho - \Lambda_q)$ can be found in \cite[Section~3.1]{RaWa02}, see also \cite{Ha00}. In the following, we describe a suitable spectral representation of $\ho$.
Set
$$
\phi(x) : = \frac{b|x|^2}{4}, \quad x = (x_1,x_2)\in \rd.
$$
 Introduce the magnetic creation operator
\bel{D132c}
a^* = \Pi_1(A) - i \Pi_2(A) =  -2i e^{\phi} \frac{\partial}{\partial z} e^{-\phi}, \quad z = x_1+ix_2,
\ee
and the magnetic annihilation operator
\bel{D132a}
a =  \Pi_1(A) + i \Pi_2(A) = -2i e^{-\phi} \frac{\partial}{\partial \bar{z}} e^{\phi}, \quad \bar{z} = x_1-ix_2.
\ee
The operators $a$ and $a^*$ are closed on $\gD(a) = \gD(a^*) = {\rm H}_A^1(\rd)$, and are mutually adjoint in $L^2(\rd)$. Moreover,
	\bel{Da31}
[a, a^*] = 2b,
	\ee
 and 
$$
\ho = a^* a + b   = a a^* - b.
$$
 Further,
 \bel{Dj1}
    {\rm ker}\,(\ho - \Lambda_q) = (a^*)^q\, {\rm ker}\,(a), \quad q \in \Z_+.
    \ee
    By \eqref{D132a}, we have
    \begin{equation} \label{ker_a}
    {\rm ker}\;(a) = \left\{u \in L^2(\rd) \, | \, u = e^{-\phi}\,g, \quad \frac{\partial g}{\partial \bar{z}} = 0\right\}.
    \end{equation}
    Thus, $e^\phi \,{\rm ker}\,(\ho - \Lambda_0) = e^\phi \,{\rm ker}\,(a)$ coincides with  the {\em Fock-Segal-Bargmann space} of entire functions $g \in L^2(\rd;e^{-2\phi}dx)$
    (see, e.g., \cite[Section 3.2]{Ha00}).
    Assume now that
    $$
   u \in  {\rm ker}\,(\ho - \Lambda_q), \quad q \in \N.
   $$
   By \eqref{Dj1} and \eqref{D132c},  there exists an entire function $g \in L^2(\rd; e^{-2\phi}dx)$ such that
   \begin{equation*}
   \begin{split}
    (e^\phi u)(x)& = :f (x)=  ((e^\phi (a^*)^q \, e^{-\phi})g)(x) =  (-2i)^q \left(\left(e^{2\phi} \frac{\partial^q}{\partial z^q}\,e^{-2\phi}\right)g\right)(x) \\
    &=
    (-2i)^q \sum_{\ell =0}^q \binom{q}{\ell}\left(-\frac{b\bar{z}}{2}\right)^\ell g^{(q-\ell)}(z),\quad x \in \rd,\,\,
    z = x_1+ix_2.
    \end{split}
    \end{equation*}
    Evidently, $f \in L^2(\rd; e^{-2\phi}dx)$ is a polyanalytic function of order $q+1$, i.e. $f$ is a solution of the equation
    $$
    \frac{\partial^{q+1}f}{\partial \bar{z}^{q+1}}(z) = 0,\quad z \in \C,
    $$
   (see \cite{AbFe14,Ba83,BaZu70} and also \cite[Section 2.2]{RV19}).
   We have
    $$
    \left\{h \in L^2(\rd; e^{-2\phi}dx) \,\big| \, \frac{\partial^{q+1}h}{\partial \bar{z}^{q+1}} = 0\right\} = \bigoplus_{j=0}^q e^\phi \, {\rm ker}\,(\ho - \Lambda_j)
    $$
    and the spaces $e^\phi \, {\rm ker}\,(\ho - \Lambda_j) $, $j=0,\ldots,q$, are called sometimes  {\em true poly-Fock spaces} of order $j$ (see \cite{AbFe14,Va00}).
    
    Next, we introduce an explicit orthonormal basis of every ${\rm ker}\,(\ho - \Lambda_q)$, $q \in \Z_+$, called sometimes the {\em angular-momentum basis}.
  Let at first $q=0$. Then the functions
    $$
    \widetilde{\varphi}_{k,0}(x) = z^k e^{-\phi(x)}, \quad x  \in \rd, \quad z = x_1+ix_2, \quad k \in \Z_+,
    $$
    form an orthogonal basis of ${\rm ker}\;(a) = {\rm ran}\,(p_0)$ (see, e.g., \cite[Sections 3.1--3.2]{Ha00}). Normalizing, we obtain an  orthonormal basis of ${\rm ran}\,(p_0)$, consisting of the functions
    \begin{equation*}
    {\varphi}_{k,0}(x) : = \frac{ \widetilde{\varphi}_{k,0}(x)}{\| \widetilde{\varphi}_{k,0}\|_{L^2(\rd)}} =
    \sqrt{\frac{b}{2\pi}} \sqrt{\frac{1}{k!}} \left(\sqrt{\frac{b}{2}} \, z\right)^k\, e^{-\phi(x)}, \quad x \in \rd, \quad k \in \Z_+.
    \end{equation*}
    Let now $q \geq 1$. Set
    \begin{equation*}
   \widetilde{{\varphi}}_{k,q} = (a^*)^q \, \varphi_{k,0}, \quad k \in \Z_+.
    \end{equation*}
    The commutation relation \eqref{Da31} easily implies
    $$
    \langle \widetilde{{\varphi}}_{k,q}, \widetilde{{\varphi}}_{\ell,q}\rangle_{L^2(\rd)} = (2b)^q q! \delta_{k\ell}, \quad k,\ell \in \Z_+.
    $$
    Therefore, the functions
    \begin{equation*}
    {\varphi}_{k,q} : = \frac{ \widetilde{\varphi}_{k,q}}{\| \widetilde{\varphi}_{k,q}\|_{L^2(\rd)}} = \frac{ \widetilde{\varphi}_{k,q}}{\sqrt{(2b)^q q!}}, \quad k \in \Z_+,
    \end{equation*}
    form an orthonormal basis of ${\rm ran}\,(p_q)$, $q \in \N$.
They  admit a more explicit representation 
(see \cite[Subsection 3.1]{RaWa02}), namely
    \begin{equation}  \label{g6}
    \begin{split}
    \varphi_{k,q}(x) = \frac{1}{i^q} \sqrt{\frac{b}{2\pi}} \sqrt{\frac{q!}{k!}}  \left(\sqrt{\frac{b}{2}} \, z\right)^{k-q}&\,{\rm L}_q^{(k-q)}\left(\frac{b |x|^2}{2}\right) e^{-\phi(x)}, \\
    &x  \in \rd, \quad z = x_1+ix_2, \quad k \in \Z_+,
    \end{split}
    \end{equation}
where
\begin{equation*}
    {\rm L}_q^{(\alpha)}(t): = \frac{t^{-\alpha} e^{t}}{q!} \frac{d^q}{dt^q}
\left(t^{q+\alpha}  e^{-t}\right), \quad t>0, \quad \alpha \in \re, \quad q \in \Z_+,
\end{equation*}
are the (generalized) Laguerre polynomials (see \cite[Eq. 8.970 (1)]{GR07}). In particular,
    \bel{84}
    {\rm L}_1^{(\alpha)}(t) = -t + \alpha + 1,
    \ee
    \bel{85}
    {\rm L}_2^{(\alpha)}(t) = \frac{1}{2}\left(t^2 - 2 (\alpha+2)t + (\alpha+2)(\alpha+1)\right).
    \ee
The Laguerre polynomials ${\rm L}_q^{(\alpha)}$ with $q \in \Z_+$ and  $\alpha > -1$ satisfy\\[0,5mm]
    \bel{300}
\int_0^\infty e^{-t} t^{\alpha} {\rm L}_q^{(\alpha)}(t) {\rm L}_p^{(\alpha)}(t) dt = \Gamma(\alpha+1) \binom{q +\alpha}{q} \delta_{qp}, \quad q,p \in \Z_+,
 \ee
(see \cite[Eq. (5.1.1)]{Sz75}). 

Finally, for $y  \in \rd$, introduce the {\em magnetic translations}
    \bel{D5200}
(\cT_y u)(x) : = \E^{-\I\frac{b}{2}(x \wedge y)}\, u(x - y), \quad x \in \rd,
    \ee
    where
    $$
    x \wedge y : = x_1 y_2 - x_2 y_1.
    $$
    Evidently, for each $y \in \rd$, the operator $\cT_y$ is
     unitary in $L^2(\rd)$. A direct calculation yields
     $$
     \cT_y^* \, \Pi_j(A)\, \cT_y = \Pi_j(A), \quad j=1,2,
     $$
     and, therefore,
     $$
     \cT_y^* \, \ho \, \cT_y = \ho, \quad y \in \rd.
     $$
    Then the spectral theorem implies
    \bel{D5201z}
     \cT_y^* \, p_q \, \cT_y = p_q, \quad y \in \rd, \quad q \in \Z_+.
     \ee

\section{Proofs of the main results}
\label{s4} \setcounter{equation}{0}
\subsection{Proof of Theorem \ref{th2}}
\label{ss2} 

We start with the proof of item (i). Denote by $\oin$ and $\oex$ the interior and the exterior of $\ga$ respectively, and by $\nu$ the unit normal vector on $\ga$ pointing 
outwards of $\oin$.
       For $w \in L^2(\rd)$ set
       $$
       w_\natural : = w_{|\Omega_\natural}, \quad \natural = {\rm in}, {\rm ex}.
       $$
       In view of~\eqref{def_H_ups} we have that $w \in \gD(\hapm)$ is equivalent to the following conditions:\\[2mm]
       (a) $w \in {\rm H}_A^1(\rd)$;\\ [2mm]
       (b) $(-i \nabla - A)^2 w_\natural \in L^2(\onat)$, $\natural = {\rm in}, {\rm ex}$;\\[2mm]
       (c)
       $\left(\frac{\partial w_{\rm ex}}{\partial \nu} - \frac{\partial w_{\rm in}}{\partial \nu} \mp \upsilon w\right)_\ga = 0$.\\[2mm]
    Moreover, if $ w\in \gD(\hapm)$, then
       \bel{g1}
       (\hapm w)_\natural = (-i\nabla-A)^2 w_\natural, \quad \quad \natural = {\rm in}, {\rm ex}.
       \ee
       Assume $u \in \Ker\,(\tqaga)$, $q \in \Z_+$. Since $\upsilon \geq 0$, by~\eqref{kernel_Toeplitz_operator} this is equivalent to
    \bel{g0}
    u \in \Ker\,(\ho - \Lambda_q) \subset \gD(\ho) ={\rm H}_A^2(\rd)
    \ee
     and 
     \bel{g3}
     \upsilon u = 0 \quad \mbox{on} \quad \ga.
     \ee
    By  $u \in {\rm H}_A^2(\rd)$ conditions (a)--(b) are fulfilled and, moreover,
       \bel{g5a}
       \frac{\partial u_{\rm ex}}{\partial \nu} = \frac{\partial u_{\rm in}}{\partial \nu} \quad \mbox{on} \quad \ga.
       \ee
       Combining \eqref{g3} with \eqref{g5a} we find that also (c) also holds, i.e.
       $ u \in \gD(\hapm)$.
        By  \eqref{g0}  we have
       $$
       \ho u = (-i\nabla-A)^2 u = \Lambda_q u \quad \mbox{in} \quad \rd
       $$
     and, hence,
       \bel{g5}
       (-i\nabla-A)^2 u_\natural = \Lambda_q u_\natural \quad \mbox{in} \quad \onat,\quad \natural = {\rm in}, {\rm ex}.
       \ee
       Bearing in mind \eqref{g1}, we now find that  \eqref{g5} implies
       $\hapm u = \Lambda_q u$,
       i.e. $$u \in \Ker\,(\hapm -\Lambda_q).$$

The remaining items (ii) and (iii) will be proved together. To make the proof accessible in an easier way, we have split it into several steps.  First, the case $\upsilon \geq 0$ is treated. In {\it Step 1} the claims for the first Landau level $\Lambda_0$ are shown. In {\it Step 2--5} the claims for $\Lambda_q$, $q \in \mathbb{N}$, are verified. More precisely, in {\it Step 2} the eigenvalue equation is reduced to equations for operators which are easier accessible for our purposes. Then, in {\it Step 3} a representation of $\Ker\,(\tqaga)$ involving these new operators is proved. In {\it Step 4} we are putting all this together to verify assertion~(ii) for $\upsilon \geq 0$, while in {\it Step~5} the proof of item~(iii) in this case is concluded. Finally, in {\it Step~6} the case $\upsilon \leq 0$ is treated. 

First let us introduce the notations which will be used throughout the proof. Assume, as usual, that $\upsilon \in L^p(\ga; \re)$ with $p>1$, $\upsilon \geq 0$, $\upsilon \neq 0$, and \eqref{2} holds true. Set
$$
\qaep : =  \ro - \ra, \quad \qaem : =  -\ro + \ram.
$$
By Lemma~\ref{lemma_resolvent}, we have $\qaepm \geq 0$, and the operators $\qaepm$ are compact in $L^2(\rd)$. Note that Lemma~\ref{lemma_resolvent} also implies
    \bel{28}
    \qaepm = (\ho +\lambda)^{-1/2} \gae^* (I \pm \gae \gae^*)^{-1} \gae (\ho +\lambda)^{-1/2}.
    \ee
    Further, put
    $$
    \pqp : = \sum_{j=q}^\infty p_j\quad\text{and} \quad \pqm : = I - \pqp,
    $$
    so that $P_0^+ = I$, and $P_0^- = 0$.  For $q \geq 1$ the projections $P_q^\pm$ have infinite rank. Finally, set
    $$
    \muql : = \left(\Lambda_q + \lambda\right)^{-1}, \quad q \in \Z_+, \quad \lambda > -b.
    $$
    
{\it Step~1:} We first prove the part of Theorem \ref{th2}\,(ii) and (iii) concerning  positive perturbations $\ha$ and start with the case $q=0$. Assume that
$$
u \in \Ker(\ha - \Lambda_0).
$$
 Then $u$ satisfies
 $$
 \ra u = \muol u
 $$
 or, equivalently,
    \bel{33}
 \left(\ro - \muol\right) u -\qaep u = 0, \quad \lambda> - b.
    \ee
 Thus,
    \bel{29}
    \big\langle\left(\ro - \muol\right) u, u\big\rangle_{L^2(\rd)} - \big\langle\qaep u,u\big\rangle_{L^2(\rd)} = 0.
    \ee
    Both terms on the left-hand side of \eqref{29} are non-positive, and hence they both should vanish.  Since $\ro - \muol$ is non-positive, the equality
    $$
    \big\langle\left(\ro - \muol\right) u, u\big\rangle_{L^2(\rd)} = 0
    $$
     and the min-max principle imply $u = p_0 u$. Then,
    $$
    \big\langle\qaep u,u\big\rangle_{L^2(\rd)} = \big\langle p_0\qaep p_0 u,u\big\rangle_{L^2(\rd)} = 0,
    $$
    where the operator $p_0\qaep p_0$ is   self-adjoint and non-negative in the space $p_0L^2(\rd) = {\rm ran}\,(p_0)$.  Hence, by Lemma~\ref{lemma_kernel}
    \bel{30}
    u \in \Ker\,(p_0\qaep p_0) = \Ker\,(\toaga).
    \ee
    Thus, we obtain the inclusion $\subset$ in \eqref{24} for $\ha$ and $q=0$. The remaining inclusion 
     $\supset$ in \eqref{24} is clear by (i).  Therefore, item~(iii) is shown in the case of positive perturbations and $q=0$, which implies also assertion~(ii) in the same case.
     
     {\it Step~2:} Assume now
$$
u \in \Ker\,(\ha -\Lambda_q), \quad q \in \N.
$$
In this step we reduce the eigenvalue equation for $u$ to equations for operators which are easier accessible for our purposes.
Similarly to \eqref{33} we have
    \bel{34}
 \left(\ro - \muql\right) u -\qaep u = 0, \quad \lambda > -b.
    \ee
Set
\begin{equation} \label{def_u_pm}
u^+ : = \pqp u\quad\text{and}\quad u^- : = \pqm u,
\end{equation}
so that
    \bel{47}
u = u^+ + u^-.
    \ee
     Since $P_q^\pm$ are functions of $H_0$, they commute with $(H_0+\lambda)^{-1}$ and thus, their application to \eqref{34} implies
    \bel{35}
     \left(\ro - \muql\right) u^+ = \pqp \qaep u^+ +  \pqp \qaep u^-,
     \ee
     \bel{36}
     \left(\ro - \muql\right) u^- = \pqm \qaep u^+ +  \pqm \qaep u^-.
     \ee
Let
\begin{equation} \label{def_Sqp}
 \sqp : = \pqm \qaep \pqm
 \end{equation}
  and observe that by Lemma~\ref{lemma_resolvent} the operator $\sqp$ is compact, self-adjoint, and non-negative in  $\pqm L^2(\rd)$. Set
 \bel{a2}
 m_q^+(\lambda) := \inf \sigma\left(\left(\ro - \muql\right)_{| \pqm L^2(\rd)}\right) = \frac{2b}{(\Lambda_q+\lambda)(\Lambda_{q-1} + \lambda)},
 \ee
 and
    \begin{equation*}
\sqgp : = \sqp \, \one_{[m_q^+(\lambda),\infty)}\left(\sqp\right), \quad \sqlp : = \sqp - \sqgp;
 \end{equation*}
here and  in the sequel $\one_{\cJ}(T)$ denotes the spectral projection of the operator $T=T^*$ associated with the Borel set $\cJ \subset \re$. Note that
    \begin{equation*}
 {\rm rank}\,(\sqgp) < \infty.
     \end{equation*}
Moreover, if  $$\|\sqp\| < m_q^+(\lambda),$$ then $\sqgp = 0$.
Now, \eqref{36} is equivalent to
\bel{36a}
     \left(\ro - \muql - \sqlp\right) u^- = \pqm \qaep u^+ +  \sqgp u_>^-,
     \ee
     where
     \begin{equation} \label{def_ugm}
     u_>^- : =   \pqgm u
     \end{equation}
     and
     \begin{equation} \label{def_PL}
      \pqgm =  \pqgm(\lambda) : = \one_{[m_q^+(\lambda),\infty)}(\sqp) \, \pqm.\\[1mm]
      \end{equation}
      Note that
      \begin{equation} \label{dim_ran_PL}
     {\rm rank}\,(\pqgm(\lambda)) = {\rm rank}\,(\sqgp)  < \infty.
     \end{equation}
      Since $\sqp$ is compact and $m_q^+(\lambda)>0$, there exists $\varepsilon > 0$ such that 
     \begin{equation*}
       \sigma(\sqp) \cap (m_q^+(\lambda) - \varepsilon, m_q^+(\lambda)) = \emptyset
     \end{equation*}
     and hence $\| \sqlp \| \leq m_q^+(\lambda) - \varepsilon<m_q^+(\lambda) $. By definition of $m_q^+(\lambda)$, we have 
     $$\big(\ro - \muql\big)_{| \pqm L^2(\rd)} \geq m_q^+(\lambda)$$ 
     and thus, the operator
     $$
     \left(\ro - \muql\right)_{| \pqm L^2(\rd)} - \sqlp\\[1mm]
     $$
    is positive and boundedly invertible on $\pqm L^2(\rd)$. Set
     $$
     R_q^+(\lambda) : =\left(\left(\ro - \muql\right)_{| \pqm L^2(\rd)} - \sqlp\right)^{-1}.
     $$
     Then \eqref{36a} implies
     \bel{37}
     u^- = R_q^+(\lambda) \pqm \qaep u^+ + R_q^+(\lambda) \sqgp u_>^-.
     \ee
      Inserting \eqref{37} into \eqref{35} we get
     \bel{44}
     \pqp \xqpe \pqp u^+ = -\pqp \yqpe u_>^-,
     \ee
     where
     \begin{equation*}
     \xqpe : = -\ro +\muql + \qaep + \qaep R_q^+(\lambda) \pqm \qaep
     \end{equation*}
     and
    \begin{equation*}
     \yqpe : = \qaep R_q^+(\lambda)\, \sqgp.
     \end{equation*}
     
     {\it Step~3:} 
     Define the operator
     \bel{200}
     K_q^+ := \pqp \xqpe \pqp,
    \ee
      which is self-adjoint and non-negative in $\pqp L^2(\rd)$. In this step we show
   \bel{40}
     \Ker\,(K_q^+) = \Ker\,(\tqaga).
     \ee
     In order to check \eqref{40}, assume first that $w \in \Ker\,(\tqaga)$. Then $w = p_q w$,  see Remark~\ref{remark_Toeplitz}, and
      \begin{equation} \label{63a}
      \begin{split}
     \langle K_q^+& w, w\rangle_{L^2(\rd)} =
     \big\langle (-\ro +\muql)p_q w, p_q w \big\rangle_{L^2(\rd)} \\
     &+  \big\langle \qaep p_q w, p_q w \big\rangle_{L^2(\rd)}+
     \big\langle  \qaep R_q^+(\lambda) \pqm \qaep p_q w, p_q w \big\rangle_{L^2(\rd)}.
     \end{split}
     \end{equation}
      Using Lemma~\ref{lemma_kernel} and $\Ker\,(A^*A)  =\Ker\,(A)$, one has
     \bel{63}
     \Ker\,(\tqaga) = \Ker\,(p_q \qaep p_q) = \Ker\,(\qaep^{1/2} p_q).
     \ee 
     Moreover,  the definitions of $\muql$ and $p_q$ yield
     \bel{64a}
      \quad \big(\ro -\muql\big) p_q = 0.
    \ee
     Thus, \eqref{63a}--\eqref{64a} imply
     $$
     \langle K_q^+ w, w\rangle_{L^2(\rd)} =  0.
     $$
     Since $K_q^+ \geq 0$ we conclude that $w \in \Ker\,(K_q^+)$, i.e.
     \bel{41}
     \Ker\,(\tqaga) \subset \Ker\,(K_q^+) .
     \ee
     
     Let now $w \in \Ker\,(K_q^+)$. Then
   $$
     \big\langle (-\ro +\muql)\pqp w, \pqp w \big\rangle_{L^2(\rd)} +
     \big\langle \qaep \pqp w, \pqp w \big\rangle_{L^2(\rd)}
     $$
      \bel{42}
     +\big\langle  \qaep R_q^+(\lambda) \pqm \qaep \pqp w, \pqp w \big\rangle_{L^2(\rd)}
      = 0.
    \ee
     The three terms on the left-hand side of \eqref{42} are non-negative and therefore all of them vanish. The equality
     $$
     \big\langle (\ro - \muql)\pqp w, \pqp w \big\rangle_{L^2(\rd)} = 0
     $$
     implies that $\pqp w = p_q w$. Inserting this into
     $$
     \langle \qaep \pqp w, \pqp w \rangle_{L^2(\rd)} = 0
     $$
     we obtain
     $$
     \langle  \qaep  p_q w, p_q w\rangle_{L^2(\rd)} = 0,
     $$
     i.e.  with~\eqref{63}
     \begin{equation*}
     w \in  \Ker\,(p_q \qaep p_q) = \Ker\,(\tqaga).
     \end{equation*}
     Therefore,
     $$
     \Ker\,(K_q^+)   \subset \Ker\,(\tqaga),
     $$
     which combined with \eqref{41} yields \eqref{40}.

     {\it Step 4:} Now we have everything in hands to finish the proof of assertion~(ii) for $\upsilon \geq 0$. For this purpose we show that the element $u \in \Ker\,(\ha -\Lambda_q)$ can be written as $u = u_0^+ + W_q^+ u_>^-$, where $u_0^+ \in \Ker \,(\tqaga)$, $u_>^- \in {\rm ran}\,(\pqgm(\lambda))$, and $W_q^+$ is a suitable operator; this implies~\eqref{23a} as ${\rm rank}\,(\pqgm(\lambda)) < \infty$ by~\eqref{dim_ran_PL}.
     Let $\pi_0^+$ be the orthogonal projection onto $\Ker \, (K_q^+)$ and $\pi_\perp^+ : = \pqp - \pi_0^+$. Set
     $$
     u_0^+ : = \pi_0^+ u^+\quad\text{and} \quad u_\perp^+ : = \pi_\perp^+ u^+,
     $$
     where $u^+$ is the function  defined in~\eqref{def_u_pm}.
     Thus,
      \bel{49}
     u^+ = u_0^+ + u_\perp^+.
     \ee
     Denote by $K_{q,\perp}^+$ the operator $\pi_\perp^+ K_q^+ \pi_\perp^+$, which is self-adjoint and positive on ${\rm ran}\,(\pi_\perp^+)$. Then,  as $u_0^+ \in \Ker\, (K_q^+)$, \eqref{44} and \eqref{200} imply
     \bel{45}
     K_{q,\perp}^+  u_\perp^+ = - \pqp \yqpe  u^-_>.
     \ee
      Since we started with an arbitrary $u \in \Ker\,(\ha -\Lambda_q)$ we have by~\eqref{def_ugm}
    $$
     u_>^- \in \pqgm \Ker\,(\ha -\Lambda_q).
     $$
     Then \eqref{45} implies
      \bel{51}
    \pqp \yqpe \pqgm \Ker\,(\ha -\Lambda_q) \subset {\rm ran}\,(K_{q,\perp}^+).
     \ee

      Recall that $K_{q,\perp}^+$ is positive and hence invertible. Therefore, we can define on $ \pqgm \Ker \,(\ha -\Lambda_q)$ the operator
     $$
     L_q^+ : = -(K_{q,\perp}^+)^{-1} \pqp \yqpe.
     $$
     By \eqref{45} we have
     \bel{48}
     u_\perp^+ = L_q^+ u_>^-.
     \ee
     Putting together \eqref{47}, \eqref{37}, \eqref{49}, and \eqref{48}, we find that for any $u \in \Ker\,(\ha - \Lambda_q)$ we have
     \bel{50}
     u = u_0^+ + W_q^+ u_>^-,
     \ee
     where
     $$
     u_0^+ \in \Ker\,(K_q^+) = \Ker\,(\tqaga), \quad u_>^- \in \pqgm \Ker \,(\ha -\Lambda_q),
     $$
     and
     $$
     W_q^+ = R_q^+(\lambda)  \sqgp +(I + R_q^+(\lambda) \pqm \qaep) L_q^+.
     $$
     Deriving \eqref{50} we have also taken into account that
     $$
     R_q^+(\lambda) \pqm \qaep u_0^+ = 0
     $$
     due to  $u_0^+ \in \Ker\,(K_q^+)$,~\eqref{40}, and \eqref{63}.
     Now, \eqref{50} and \eqref{51} entail \eqref{23a} for the case of $\ha$ with
     $$
     n_q^+ : = \inf_{\lambda \in (-b,\infty)} {\rm rank}\,(\pqgm(\lambda)),\quad q \in \N.
     $$
     
     {\it Step 5:} Let us prove item~(iii) for $\ha$ and $q \in \N$. Note that we have already shown assertion~(i) and hence 
     it suffices to verify the inclusion $\subset$ in \eqref{24}. Since $\| (I +\gae^* \gae)^{-1} \| \leq 1$ we get by \eqref{def_Sqp} and \eqref{74},   
     \bel{72}
     \|S_q^+(\lambda)\| \leq  \|\gae\|^2 \leq \|\upsilon\|_{L^p(\ga)} C_p(\lambda),\quad \lambda > -b+1.
     \ee
     Let
     \begin{equation} \label{def_v_plus}
     \|\upsilon\|_{L^p(\ga)} < \upsilon^+_q : = \sup_{\lambda \in (-b+1,\infty)} \frac{m_q^+(\lambda)}{C_p(\lambda)},
    \end{equation}
   where $m_q^+(\lambda)$ is the quantity defined in \eqref{a2}.
   Let $R > 0$ be such that $\Gamma$ is contained in the open ball $B_R$ with radius $R$.
   Observe that by the diamagnetic inequality
   \begin{equation}\label{eq:Cp1}
   	\begin{aligned}
   	C_p(1) &= 
   	\sup_{0\ne u\in {\rm H}^1_A(\rd)}
   	\frac{\left(\int_\Gamma |\tau u|^{2p'}ds\right)^{1/p'}}{\int_{\rd}\big(|\Pi(A)u|^2 + |u|^2\big)dx}\\
   	&\le 
   	\sup_{0\ne u\in {\rm H}^1_A(\rd)}
   	\frac{\left(\int_\Gamma |\tau u|^{2p'}ds\right)^{1/p'}}{\int_{\rd}\big(\big|\nabla |u|\big|^2 + |u|^2\big)dx} \\
   	&\leq \sup_{0\ne u\in {\rm H}^1_A(\rd)}
   	\frac{\left(\int_\Gamma |\tau u|^{2p'}ds\right)^{1/p'}}{\int_{B_R}\big(\big|\nabla |u|\big|^2 + |u|^2\big)dx}\\
   	&= \sup_{0\ne u\in {\rm H}^1(B_R)}
   	\frac{\left(\int_\Gamma |\tau u|^{2p'}ds\right)^{1/p'}}{\int_{B_R}\big(\big|\nabla |u|\big|^2 + |u|^2\big)dx} =: c^{-1},\qquad p'=\frac{p}{p-1},
   	\end{aligned}
   \end{equation}
   where the constant $c> 0$ does not depend on the magnetic field. 
   Hence, the constant $\upsilon^+_q$ can be estimated from below	as 
   \[
   	\upsilon^+_q \ge \frac{m_q^+(1)}{C_p(1)} 
   	\ge \frac{2bc}{(\Lambda_q+1)(\Lambda_{q-1}+1)},
   \]
   which is the bound in~\eqref{estimate_v_q_pm}.
   Furthermore, \eqref{72} implies that there exist $\lambda > -b+1$ such that
     $$
     \|S_q^+(\lambda)\|  < m_q^+(\lambda),
     $$
     so that $\pqgm(\lambda) = 0$  by~\eqref{def_PL}. By \eqref{50}, we conclude that if  $u \in \Ker\,(\ha -\Lambda_q)$, then $u \in \Ker\,(\tqaga)$, as $u_>^- \in {\rm ran}\,(\pqgm(\lambda)) = \{ 0 \}$. 
     Therefore, together with (i) we conclude that \eqref{24} holds.

{\it Step 6:}
 Let us now consider $\ham$, i.e. the Landau Hamiltonian perturbed by a negative $\delta$-potential. The proof of Theorem \ref{th2}\,(ii) and (iii) in this case is quite similar to the one concerning $\ha$, so that we omit certain details. Assume
$$
u \in \Ker\,(\ham -\Lambda_q), \quad q \in \Z_+.
$$
Then, similarly to \eqref{34}, we have
    \bel{N34}
 \left(\ro - \muql\right) u +\qaem u = 0, \quad \lambda > - \inf \sigma(\ham).
    \ee
    Put
$$
u^+ : = \pqup u\quad\text{and}\quad \quad u^- : = \pqum u,
$$
so that again
    \begin{equation*}
    u = u^+  + u^-.
    \end{equation*}
    Then \eqref{N34} implies
    \bel{N35}
     \left(\muql - \ro\right) u^- =  \pqum \qaem u^- +  \pqum \qaem u^+,
     \ee
     \bel{N36}
     \left(\muql-\ro\right) u^+  =  \pqup \qaem u^- + \pqup \qaem u^+.
     \ee
     Let
$$
 \sqm : = \pqup \qaem \pqup
 $$
 and observe that by Lemma~\ref{lemma_resolvent} the operator
 $\sqm$ is compact, self-adjoint, and non-negative in  $\pqup L^2(\rd)$. Set
 \bel{Na2}
 m_q^-(\lambda) := \inf \sigma\left(\left(\muql-\ro\right)_{| \pqup L^2(\rd)}\right) = \frac{2b}{(\Lambda_q+\lambda)(\Lambda_{q+1} + \lambda)}
 \ee
 and
    \begin{equation*}
\sqgm : = \sqm \, \one_{[m_q^-(\lambda),\infty)}\left(\sqm\right), \quad \sqlm : = \sqm - \sqgm.
 \end{equation*}
Now \eqref{N36} is equivalent to
 \bel{N36a}
     \left(\left(\muql-\ro\right) - \sqlm\right) u^+  =  \pqup \qaem u_- + \sqgm  u^+_>,
     \ee
     where
     $$
     u_>^+ : =   \pqugp u
     $$
     and
     $$
      \pqugp =  \pqugp(\lambda) : = \one_{[m_q^-(\lambda),\infty)}(\sqm) \, \pqup.\\[1mm]
      $$
      Note that
      \begin{equation*}
     {\rm rank}\,(\pqugp(\lambda)) = {\rm rank}\,(\sqgm)  < \infty.
     \end{equation*}
 The operator
     $$
    \left(\muql - \ro\right)_{| \pqup L^2(\rd)} - \sqlm\\[1mm]
     $$
    is positive and boundedly invertible in $\pqup L^2(\rd)$. Set
     $$
     R_q^-(\lambda) : =\left(\left(\muql - \ro\right)_{| \pqup L^2(\rd)} - \sqlm\right)^{-1}.
     $$
     Then \eqref{N36a} implies
     \bel{N37}
     u^+ =  R_q^-(\lambda)\,\pqup \,\qaem \, u^-  +   R_q^-(\lambda)\,\sqgm \,  u_>^+.
     \ee

     Inserting \eqref{N37} into \eqref{N35}, we obtain
     \begin{equation*}
     \pqum \xqme \pqum u^- = - \pqum \yqme u_>^+,
     \end{equation*}
     with
     \begin{equation*}
     \xqme : = \ro -\muql + \qaem + \qaem R_q^-(\lambda)P_{q+1}^+ \qaem
     \end{equation*}
     and
     \begin{equation*}
     \yqme : = \qaem R_q^-(\lambda)\, \sqgm.
     \end{equation*}
     Then, similarly to \eqref{50}, we find that for any $u \in \Ker\,(\ha-\Lambda_q)$ we have
     \bel{N50}
     u = u_0^- + W_q^- u_>^+,
     \ee
    with
     $$
     u_0^- \in \Ker\,(\tqaga), \quad u_>^+ \in \pqugp \Ker \,(\ham -\Lambda_q),
     $$
     and an appropriate operator
     $$
     W_q^- : \pqugp \Ker \,(\ham -\Lambda_q) \to L^2(\rd).
     $$
    Now, \eqref{N50}  entails \eqref{23a} for  $\ham$ with
     $$
     n_q^- : = \inf_{\lambda \in (-\inf \sigma(\ha), \infty)} {\rm rank}\,(\pqugp(\lambda)) < \infty.
     $$
     Finally, we prove \eqref{24} for $\ham$. Assume that
     \bel{75}
     \|\upsilon\|_{L^p(\ga)} < \upsilon_q^- : = \sup_{\lambda \in (-b+1,\infty)} \frac{m_q^-(\lambda)}{C_p(\lambda)(1+ m_q^-(\lambda))},
     \ee
     where $C_p(\lambda)$ and $m_q^-(\lambda)$ are  the quantities defined in \eqref{74} and \eqref{Na2}, respectively.
     Note that using~\eqref{eq:Cp1} we can estimate $\upsilon_q^-$ from below as
     \[
     		\upsilon_q^- \ge \frac{m_q^-(1)}{C_p(1)(1+m_q^-(1))}
     		\ge \frac{2bc}{2b+(\Lambda_q+1)(\Lambda_{q+1}+1)}.
     \]	
     Furthermore,  there exists $\lambda\in (-b+1,\infty)$ such that
      \begin{equation*}
     \|\upsilon\|_{L^p(\ga)}\,C_p(\lambda) < \frac{m_q^-(\lambda)}{1+ m_q^-(\lambda)} < 1.
     \end{equation*}
     By \eqref{70}
     \bel{N76}
      \|\gae\|^2  \leq C_p(\lambda) \|\upsilon\|_{L^p(\ga)} \leq \frac{m_q^-(\lambda)}{1+ m_q^-(\lambda)}< 1.
      \ee
     On the other hand, since  $\| (I - \gae^* \gae)^{-1} \| \leq (1 - \|\gae\|^2)^{-1} $, similarly to \eqref{72} we have
     \bel{N72}
     \|S_q^-(\lambda)\| \leq \frac{\|\gae\|^2}{1-\|\gae\|^2}.
     \ee
     Putting together \eqref{N72} and \eqref{N76} we get
      $$
     \|S_q^-(\lambda)\|  < m_q^-(\lambda),
     $$
     so that $P_{q+1,>}^+(\lambda) = 0$, and  \eqref{N50} implies that for $u \in \Ker\,(H_{-\upsilon} -\Lambda_q)$ we have $u \in \Ker\,(\tqaga)$, i.e. \eqref{24} holds 
     for $\ham$.

\subsection{Proof of Theorem \ref{th1}}
\label{ss1}
Assume  that
     $$
     u = p_0 u \in \Ker\,(\toaga),
     $$
    which  is equivalent to $\upsilon^{1/2} (u_{|\ga}) = 0$ as an element of $L^2(\ga)$. Since we assume that $\upsilon \neq 0$ as an element of $L^p(\ga;\re)$, $p>1$, we  have
     $u = 0$ on a subset of $\ga$ of a positive measure. Since $e^{\phi}u$ is entire, cf.~\eqref{ker_a}, and its zeros are isolated if $u \neq 0$, we easily find that $u=0$, i.e. \eqref{23b} holds.

     \begin{remark}
     The above argument is not applicable in the case $q \geq 1$, because there exist poly-analytic functions $u$ which do not vanish identically on $\C$ but vanish on certain regular Jordan curves (see \cite[Section 5]{BaZu70}).
  \end{remark}

\subsection{Proof of Theorem \ref{th4}}
\label{ss4}
Due to the invariance of $p_q$ under the  magnetic translations (see \eqref{D5200} and \eqref{D5201z}) we may assume without loss of generality that $\cir$ is centered at the origin.

Let $u \in \Ker(\ho -\Lambda_q) = {\rm ran}\,(p_q)$, $q \in \N$. Then we have
    \bel{g10}
u(x) = \sum_{k \in \Z_+} c_k \varphi_{k,q}(x),\quad x \in \rd,
    \ee
with ${\bf c} : = \left\{c_k\right\}_{k \in \Z_+} \in \ell^2(\Z_+)$, where $\left\{\varphi_{k,q}\right\}_{k\in \Z_+}$ is the orthonormal basis of $\Ker(\ho -\Lambda_q)$ defined in \eqref{g6}.
Hence, the representation in \eqref{g10} generates a unitary operator $\cU_q : \Ker(\ho -\Lambda_q) \to \ell^2(\Z_+)$ which maps $u$ to ${\bf c}$.
 On the other hand we have
 $$
 \langle \tqocir \varphi_{k,q}, \varphi_{\ell,q}\rangle_{L^2(\rd)}= \lambda_{k,q}(r) \delta_{k\ell},
 $$
 where
 \begin{equation} \label{100}
 \begin{split}
 \lambda_{k,q}(r) :& = \langle \tqocir \varphi_{k,q}, \varphi_{k,q}\rangle_{L^2(\rd)} \\
      &= b \frac{q!}{k!} \left(br^2/2\right)^{k-q}\,{\rm L}_q^{(k-q)}\left(br^2/2\right)^2 e^{-br^2/2}, \quad r \in (0,\infty).
\end{split}
\end{equation}
    Then we have
    $$
    \cU_q \tqocir \cU_q^* = \tau_{q,r},
    $$
    where $\tau_{q,r}: \ell^2(\Z_+) \to \ell^2(\Z_+)$ is a compact self-adjoint operator defined by
    $$
    (\tau_{q,r} {\bf c})_k = \lambda_{k,q}(r) c_k, \quad k\in \Z_+,
    $$
   with ${\bf c} = \left\{c_k\right\}_{k \in \Z_+} \in \ell^2(\Z_+)$. In particular, the functions $\varphi_{k,q}$ are eigenfunctions of $\tqocir$ with eigenvalues equal to $\lambda_{k,q}(r)$.
   For $r \in (0,\infty)$ set
   \bel{g30}
   m_q(r) : = \#\left\{k \in \Z_+ \, | \,  {\rm L}_q^{(k-q)}\left(br^2/2\right) = 0\right\}.
   \ee
   Then \eqref{100} implies
   \bel{31}
   \dimker\,(\tqocir) = \#\left\{k \in \Z_+ \, | \,  \lambda_{k,q}(r) = 0\right\} =  m_q(r), \quad r \in (0,\infty),
   \ee
   and
   \bel{90}
   \cD_q =\left\{r \in (0,\infty) \,| \, \dimker\,(\tqocir) \geq 1\right\}= \left\{r \in (0,\infty)\,|\,m_q(r) \geq 1\right\}.
   \ee
   Bearing in mind the expressions for the Laguerre polynomials ${\rm L}_q^{(k-q)}$ with $q=1,2$, given in \eqref{84}--\eqref{85}, we find that the zero of ${\rm L}_1^{(k-1)}$ is equal to $k$, while the zeros of  ${\rm L}_2^{(k-2)}$ are equal to $k\pm \sqrt{k}$, $k \in \Z_+$. Thus,  \eqref{31}--\eqref{90} easily entail the explicit description of the sets $\cD_q$, $q=1,2$, and their components $\cD_{q,j}$, available in  \eqref{81}--\eqref{83}.
   
   Let us estimate $m_q(r)$ and describe $\cD_q$ in the general case. Note that the polynomial ${\rm L}_q^{(\alpha)}$ with $\alpha > -1$ has exactly $q$ simple strictly positive zeros
   (see \cite[Theorem 3.3.1]{Sz75}  and \eqref{300}). Denote by $\left\{\zeta_\ell(\alpha)\right\}_{\ell=1}^q$, $\alpha \in [0,\infty)$, the set of the zeros of ${\rm L}_q^{(\alpha)}$,
   enumerated in decreasing order. The functions $\zeta_\ell$, $\ell =1,\ldots,q$,  are smooth strictly increasing functions (see \cite[Subsection 6.21 (4)]{Sz75}) which tend to infinity as $\alpha \to \infty$ (see \cite{Ca78}). Thus we can classify the zeros of ${\rm L}_q^{(k-q)}$ with $k \geq q$.
   In order to handle the polynomials ${\rm L}_q^{(k-q)}$ with $0 \leq k < q$ we note that
   \bel{g20}
   {\rm L}_q^{(k-q)}(t) = \frac{k!}{q!} (-t)^{q-k} {\rm L}_k^{(q-k)}(t),\quad t\in \re,
  \ee
   (see \cite[Eq.(5.2.1)]{Sz75}), so that if $k=0$ the polynomial $ {\rm L}_q^{(-q)}(t)$ is proportional to $t^{q}$, while if $q \geq 2$ and $1 \leq k <q-1$ the polynomial $ {\rm L}_q^{(k-q)}$ has  $k$ simple positive zeros and a null root of order $q-k$. If $k =1,\ldots,q$, denote by $\left\{z_{m,k}\right\}_{m=1}^k$ the set of the positive zeros of
   $ {\rm L}_q^{(k-q)}$, enumerated in decreasing order. Note that
   $$
   z_{\ell,q} = \zeta_\ell(0), \quad \ell =1,\ldots,q.
   $$
   Moreover, we have
   \bel{g21}
   \frac{d}{dt} {\rm L}_k^{(q-k)}(t) = - {\rm L}_{k-1}^{(q-k+1)}(t), \quad t\in \re,
   \ee
   (see \cite[Eq.(5.1.14)]{Sz75}), so that \eqref{g20}, \eqref{g21}, and Rolle's theorem imply that  the zeros $z_{m,k}$ interlace, i.e.
   $$
   z_{m+1,k} < z_{m,k-1} < z_{m,k}
   $$
   (see \cite{DrMu15, DrMu16} for further details).  If $q \geq 2$, let us extend  the functions $\zeta_\ell$, $\ell = 1,\ldots, q-1$, to the interval
   $[-q+\ell,\infty)$. To this end, set
   $$
   \zeta_\ell(-n) = z_{\ell, q-n}, \quad n=1, \ldots, q-\ell,
   $$
   and interpolate by linear functions on the intervals $(-n,-n+1)$. Thus we obtain a family of $q$ increasing Lipschitz functions $\zeta_\ell(\alpha)$,
   $\ell = 1,\ldots,q$, defined on $\alpha \in [-q+\ell, \infty)$, which tend to infinity as $\alpha \to \infty$ and, if $q \geq 2$, we have
   $$
   \zeta_{\ell+1}(\alpha) < \zeta_{\ell}(\alpha), \quad \alpha \in [-q+\ell, \infty), \quad \ell =1,\ldots,q-1.
   $$
   Set
   $$
   \eta_\ell(\alpha): = \sqrt{2\zeta_\ell(\alpha)/b}, \quad \alpha \in [-q+\ell+1, \infty), \quad \ell = 1,\ldots,q.
   $$
   
    Thus we find that for any $r \in (0,\infty)$ the quantity $m_q(r)$ defined in \eqref{g30} is equal to the number of integers $\ell \in \{1,\ldots, q\}$ for which $r \in {\rm ran}(\eta_\ell)$ and $\eta_\ell^{-1}(r) \in \N -\{q\}$.  Then, evidently,
   $ m_q(r) \leq q$ and combined with \eqref{31} this implies \eqref{26}.
   
   Finally,
    the set  $\cD_q$ is infinite since it contains, for example, all the points  $r = \eta_1(k-q)$ with $k \in \N$. On the other hand, $\cD_q$ is discrete because it is locally finite.\\

\begin{appendix}
\section{Closedness and semiboundedness of the quadratic form in~\eqref{eq:form} }\label{app:form}
Recall that we consider for $\upsilon \in L^p(\Gamma; \mathbb{R})$ the following quadratic form
\begin{equation}\label{eq:form_app}
	\int_{\rd}|\Pi(A)u|^2dx + \int_{\ga}\upsilon |\tau u|^2\,ds, \quad u \in {\rm H}_A^1(\rd).
\end{equation}
The function $\upsilon$ can be decomposed as $\upsilon = \upsilon_1 +\upsilon_2$, where $\upsilon_1 \in L^\infty(\Gamma)$ and where $\|\upsilon_2\|_{L^p(\Gamma)} \le \delta$ for arbitrarily small $\delta >0$.
First, we get the following elementary estimate
\begin{equation}\label{eq:est0}
\begin{aligned}
	\left|\int_{\ga}\upsilon |\tau u|^2\,ds\right| 
	\le 
	\int_{\ga}|\upsilon_1| |\tau u|^2\,ds + \int_{\ga}|\upsilon_2| |\tau u|^2\,ds.
\end{aligned}	
\end{equation}
Next, we estimate the two terms on the right hand side separately.
Combining~\cite[Lemma 2.6]{BEL14}, the diamagnetic inequality~\cite[Theorem 7.21]{LL01}, and that $\upsilon_1$ is a bounded function we obtain that for any $\varepsilon > 0$ there exists $C(\varepsilon) > 0$ such that
\begin{equation}\label{eq:est1}
	\int_{\ga}|\upsilon_1| |\tau u|^2\,ds \le \varepsilon\|\Pi(A)u\|^2_{{ L}^2(\rd)} + C(\varepsilon)\|u\|^2_{{ L}^2(\rd)}.
\end{equation}
By~\cite[Lemma 5.3]{GM09} the operator of multiplication $\mathcal{M}_{|\upsilon_2|}$ with $|\upsilon_2|$ is bounded from ${\rm H}^{1/2}(\Gamma)$ into ${\rm H}^{-1/2}(\Gamma)$ and, moreover, its norm between these two spaces is estimated from above by
\begin{equation*}
	\|\mathcal{M}_{|\upsilon_2|}\|_{{\rm H}^{1/2}(\Gamma)\rightarrow {\rm H}^{-1/2}(\Gamma)} \le c\|\upsilon_2\|_{{ L}^p(\Gamma)} = c\delta,
\end{equation*}
where $c = c(\Gamma,p) > 0$.
Using the above estimate of the norm of $\mathcal{M}_{|\upsilon_2|}$ and that the mapping $\tau$ is bounded from ${\rm H}_A^1(\rd)$ into ${\rm H}^{1/2}(\Gamma)$ we get
\begin{equation}\label{eq:est2}
\begin{aligned}
	\int_{\ga}|\upsilon_2| |\tau u|^2\,ds &\le
	\|\cM_{|\upsilon_2|}\tau u\|_{\rm H^{-1/2}(\Gamma)} \|\tau u\|_{\rm H^{1/2}(\Gamma)}\\
	& \le c\delta\|\tau\|_{{\rm H}_A^1(\rd)\rightarrow{\rm H}^{1/2}(\Gamma)}^2\|u\|^2_{{\rm H}_A^1(\rd)}.
\end{aligned}
\end{equation}
Combining the estimates~\eqref{eq:est0},~\eqref{eq:est1}, ~\eqref{eq:est2}, and taking into account that the decomposition of $\upsilon$ can be chosen such that the parameter $\delta$ is arbitrarily small, we conclude that for any $\varepsilon' > 0$ there exists $C'(\varepsilon') >0$ so that
\[
	\left|\int_{\ga}\upsilon |\tau u|^2\,ds\right| \le \varepsilon'\|\Pi(A)u\|^2_{{L}^2(\rd)} + C'(\varepsilon')\|u\|^2_{{ L}^2(\rd)}.
\]
Hence, it follows from the perturbation result~\cite[Theorem VI.1.33]{K} that the quadratic form in~\eqref{eq:form_app} is closed and semibounded.

\end{appendix}

\end{document}